\numberwithin{equation}{section}
\renewcommand\LineNumber{\the\inputlineno}
\begin{document}
\title[On the distribution of additive twists of the divisor function]{On the distribution of additive twists of the divisor function and Hecke eigenvalues}
\author{mayank pandey}
\address{Department of Mathematics, Princeton University, Princeton, NJ 08540, USA}
\email{mayankpandey9973@gmail.com}
\maketitle

%
%
%
\section{Introduction}
Let $f$ be an $\SL_2(\ZZ)$ cusp form of weight $k$, and suppose it has Fourier expansion
\[f(z) = \sum_{n\ge 1} \lambda_f(n)n^{\frac{k - 1}{2}}e(nz)\]
for $z$ in the upper half plane. In this paper, one of our main objects of interest will be the exponential sum 
\[S_f(\alpha; X) = \sum_{n\le X}\lambda_f(n)e(n\alpha).\]
Jutila \cite{J} showed that this sum is $O(\sqrt{X})$ uniformly in $\alpha$ and therefore exhibits considerable oscillation. 
By Plancherel and (14.56) in \cite{IK} 
\begin{equation}
    \int_0^1 |S_f(\alpha)|^2d\alpha = \sum_{n\le X} |\lambda_f(n)|^2 = c_fX + O(X^{3/5})
    \label{eq:rankin_selberg}
\end{equation}
for some $c_f > 0$, so it is clear that Jutila's bound is sharp. 
By H\"older's inequality, it follows from these estimates that for all $s > 0$
\[\int_0^1 \bigg|\sum_{n\le X}\lambda_f(n)e(n\alpha)\bigg|^{s} d\alpha\asymp X^{\frac s2}.\]
It is desirable to know whether one can determine more information about the distribution of this exponential sum. 

Another related exponential sum is 
\[
    S_d(\alpha; X) = \sum_{n\le X} d(n)e(n\alpha)
\]
where
\[
    d(n) = \sum_{d | n} 1
\]
is the divisor function.

The properties of this exponential sum are of interest in applications of the circle method. It behaves differently from the 
exponential sum $S_f(\alpha; X)$ due to the positivity of its coefficients.
For $s > 2$, due to this positivity, the contribution of $\alpha$ near $0$ 
(and more generally near rationals with small denominator) determine the size of the $L^s$ norm. 
Using the circle method, finding asymptotics of the form 
\[
    \int_0^1 |S_d(\alpha; X)|^s d\alpha\sim C_sX^{s - 1}(\log X)^s
\]
is then quite straightforward. See \cite{P} for a proof of this for higher divisor functions, where the situation is similar.
Asymptotics in the case $s = 2$ quickly follow from Plancherel, as we have 
\[
    \int_0^1 |S_d(\alpha; X)|^2 d\alpha = \sum_{n\le X}d(n)^2\sim \frac{1}{\pi^2}X(\log X)^3.
\]
Lower moments are significantly more difficult, as for $s < 2$ one expects a nontrivial contribution from the minor arcs as well. 
Until now, the only result for moments in this range was for the $L^1$-norm and due to Goldston and the author in \cite{GP}, where it is shown that
\[
    \sqrt{X}\ll\int_0^1 |S_d(\alpha)|d\alpha\ll \sqrt{X}\log X.
\]
In this paper, we are able to find asymptotics for the $L^s$-norm of $S_d(\alpha; X)$ for all $0 < s < 2$. 
This, combined with the aforementioned results for higher moments resolves the problem of finding asymptotics for all moments of $S_d(\alpha; X)$. 
Using the same method, we are able to show the following similar result for all moments of $S_f(\alpha; X)$ as well in Theorem \ref{thm:main_thm}.
\begin{theorem}
    For $0 < s < 2$, we have that for $\star\in\set{d, f}$, with $f$ a holomorphic cusp form 
    for $\SL_2(\ZZ)$
    \[
        \int_0^1 |S_\star(\alpha; X)|^sd\alpha = C_s^\star X^{\frac s2} + O(X^{\frac{s}{2} - \eta_1s(2 - s)})
    \]
    for some $C_s^\star, \eta_1 > 0$. Furthermore, for $s\ge 2$, we also have that
    \[
        \int_0^1 |S_f(\alpha; X)|^sd\alpha = C_s^fX^\frac{s}{2} + O_{s, f}(X^{\frac s2 - \eta_2})
    \]
    for some $\eta_2 > 0$.
    \label{thm:main_thm}
\end{theorem}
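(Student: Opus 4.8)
I would prove this by showing that, as $\alpha$ ranges uniformly over $[0,1]$, the normalized sum $S_\star(\alpha;X)/\sqrt X$ converges in distribution to an explicit random variable $\Psi_\star$ with a power-saving rate, and then converting this into the asserted moment asymptotics. The starting point is a Farey dissection at level $\sqrt X$: every $\alpha$ lies within $1/(q\sqrt X)$ of a unique reduced fraction $a/q$ with $q\le\sqrt X$. Writing $\alpha=a/q+\beta$ and applying Voronoi summation — the classical formula for $d(n)$, the cuspidal one with $J_{k-1}$-Bessel kernel for $\lambda_f(n)$ — yields $S_\star(\alpha;X)=M_\star(q,\beta;X)+E_\star(a,q,\beta;X)+(\text{negligible})$, where $M_d(q,\beta;X)=q^{-1}\int_0^X(\log(y/q^2)+2\gamma)\,e(\beta y)\,dy$ is the divisor main term, $M_f\equiv 0$ because $f$ is cuspidal, and $E_\star=c_\star q^{-1}\sum_{n}\star(n)\,e(-\overline a n/q)\,\mathcal I_\star(n;q,\beta,X)$ is a dual sum with $\star(n)\in\{d(n),\lambda_f(n)\}$, $\mathcal I_\star$ an explicit Bessel-type oscillatory integral, and $e(-\overline a n/q)$ a Kloosterman-type phase; in this dissection the effective length of the dual sum is $X^{o(1)}$.

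\textbf{The limiting distribution.} Rescaling $q=t\sqrt X$ and $\beta=u/(tX)$ with $(t,u)\in(0,1]\times[-1,1]$, both $M_\star/\sqrt X$ and $E_\star/\sqrt X$ become, up to negligible error, fixed functions of $(t,u)$ — for $E_\star$, such a function times the short twisted sum $\sum_{n\ll X^{o(1)}}\star(n)\,e(-\overline a n/q)\,w_n(t,u)$. As $X\to\infty$ the residue $\overline a$ equidistributes modulo $q$; averaging over the density of the Farey cells in the $(t,u)$ variables then produces a random variable $\Psi_\star$ — bounded for $\star=f$ (indeed $|\Psi_f|\le C_f$ by Jutila's bound $S_f\ll\sqrt X$), but carrying a genuinely heavy tail for $\star=d$ because of $M_d$. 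The quantitative form of the convergence I would obtain by matching a slowly growing number of (mixed) moments of $S_\star(\cdot;X)/\sqrt X$ with those of $\Psi_\star$ to within $O(X^{-c})$: for $\star=f$ the main-term moments reduce, via the circle method, to counting additive relations weighted by $\lambda_f$, with error terms controlled by shifted-convolution bounds and Weil's bound for the complete sums in $a$ that appear on expanding; for $\star=d$, where the full integer moments are dominated by the contribution of $\alpha$ near $0$, I would instead apply this to the moments of $E_d/\sqrt X$ (which are of the correct order) and add the explicit contribution of $M_d$. A smoothing inequality (Erd\H{o}s--Tur\'an / Beurling--Selberg) then gives $\sup_t|F_X(t)-F_\infty(t)|\ll X^{-c'}$ for the distribution functions $F_X,F_\infty$ of $|S_\star(\cdot;X)|/\sqrt X$ and $|\Psi_\star|$.

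\textbf{Tail of $\Psi_d$ and passage to moments.} Since the tail of $\Psi_d$ dictates for which $s$ the constant $C_s^d$ is finite, I would next show that $|S_d(\alpha;X)|>\lambda\sqrt X$ occurs precisely for $\alpha$ near $a/q$ with $q$ small, where $|M_d|\asymp\lambda\sqrt X$ on intervals of total length $\asymp\lambda^{-2}\log\lambda$; the decisive point is that $\log(X/q^2)\to0$ as $q\to\sqrt X$, which kills any spurious power of $\log X$ and yields $\mathbb P(|\Psi_d|>\lambda)\asymp\lambda^{-2}\log\lambda$, so that $\mathbb E|\Psi_d|^s<\infty$ exactly for $s<2$ — this is the source of the phase transition. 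Finally, writing
\[ \int_0^1\bigl|S_\star(\alpha;X)/\sqrt X\bigr|^s\,d\alpha-\mathbb E|\Psi_\star|^s=-s\int_0^\infty t^{s-1}\bigl(F_X(t)-F_\infty(t)\bigr)\,dt \]
and splitting the integral at a small parameter $\eta$ and a large parameter $T$, the range $[0,\eta]$ contributes $\ll\eta^{s+\kappa}$ (using $F_X(t),F_\infty(t)\ll t^{\kappa}$, valid once one knows $\Psi_\star$ has no atom at $0$), the range $[\eta,T]$ contributes $\ll X^{-c'}T^s$, and the tail contributes $\ll T^{s-2}\log T$ for $\star=d$ and vanishes for $\star=f$ once $T>C_f$. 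Optimizing $\eta$, $T$, and the number of moments used (which scales like $1/s$) gives the error $O(X^{s/2-\eta_1 s(2-s)})$ for $0<s<2$, with the factor $2-s$ coming from the heavy-tail interpolation and the factor $s$ from the small-$|S_\star|$ range; for $s\ge 2$ and $\star=f$ the tail range is empty, which leaves the cleaner $O_{s,f}(X^{s/2-\eta_2})$.

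\textbf{Main obstacle.} I expect the hard part to be the second step: controlling $E_\star$ uniformly over \emph{all} Farey cells — truncating $\mathcal I_\star$ with an admissible error, treating its non-oscillatory part, and quantifying the equidistribution of $a\mapsto e(-\overline a n/q)$ simultaneously over the relevant $n$ — together with pinning down the tail of $\Psi_d$ without logarithmic losses, since that is exactly what makes $C_s^d$ finite throughout $0<s<2$.
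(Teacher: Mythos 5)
Your route — construct an explicit limit law $\Psi_\star$ for $S_\star(\alpha;X)/\sqrt X$ via a Farey dissection at level $\sqrt X$ plus Voronoi, prove quantitative convergence of distribution functions, then integrate — is genuinely different from the paper, which never constructs a limit object: it proves an approximate functional equation relating the $L^s$-norm at scale $X$ to the same quantity built from dual sums at scale $X^{2\delta}$, and gets $C_s^\star$ purely from a Cauchy-sequence argument. Unfortunately your construction has a gap at its core. The claim that after Voronoi the dual sum has effective length $X^{o(1)}$, so that $E_\star/\sqrt X$ becomes a fixed function of $(t,u)$ times a bounded twisted sum, is not available: with the sharp cutoff $n\le X$ the dual series only converges slowly (the edge at $n=X$ produces terms decaying like $n^{-3/4}$), and the tail beyond $X^{o(1)}$ is not pointwise negligible; if instead you smooth at relative scale $X^{-\delta_1}$ (the least smoothing for which the $L^s$ smoothing error is acceptable), the dual length at $q\asymp\sqrt X$ becomes $\asymp X^{2\delta_1}$, a growing power of $X$. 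So the post-Voronoi object is again a normalized twisted sum of growing length, and the existence of its limiting distribution is essentially the original problem at a smaller scale — this self-similarity is exactly what the paper exploits (and must, since the limit cannot be a functional of boundedly many coefficients: otherwise $\lim_X\int_0^1|S_f(\alpha;X)/\sqrt X|^2d\alpha=c_f$, the Rankin--Selberg mean over \emph{all} $\lambda_f(n)^2$, would be expressible through finitely many $\lambda_f(n)$).

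The quantitative convergence step also rests on inputs that are not available. For $\star=f$ you propose to match a growing number of even moments of $S_f/\sqrt X$ computed "via the circle method... shifted-convolution bounds and Weil's bound"; but power-saving asymptotics for $\int_0^1|S_f(\alpha;X)|^{2r}d\alpha$ (equivalently the $\lambda_f$-weighted count of $n_1+\dots+n_r=m_1+\dots+m_r$) are precisely Corollary 1.3 of the paper, i.e.\ a consequence of the theorem; for $r\ge 3$ they are not a routine output of shifted convolutions, so using them as input is circular. For $\star=d$ the method of moments cannot see the limit at all: moments of order $\ge 2$ of $S_d/\sqrt X$ diverge (the second is $\asymp(\log X)^3$), the claimed limit with tail $\asymp\lambda^{-2}\log\lambda$ has infinite variance and is not moment-determinate, and matching moments of $E_d/\sqrt X$ alone ignores the joint law of $(M_d,E_d)$ on the Farey cells. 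Moreover, passing from approximate moment (or characteristic function) agreement to $\sup_t|F_X(t)-F_\infty(t)|\ll X^{-c'}$ via Erd\H{o}s--Tur\'an requires a modulus-of-continuity bound for $F_\infty$, which you have not established and which is itself nontrivial; and the tail range $[T,\infty)$ needs a large-values estimate for $F_X$ uniform in $X$, which also needs an argument. By contrast the paper gets its $s(2-s)$ saving for $0<s<2$ from an $L^2$-based Jutila-type circle method with $Q\asymp X^{1/2+\delta}$ combined with H\"older, the observation that averaging over $a\ (\mathrm{mod}\ q)$, $(a,q)=1$, with $q$ far larger than the dual length is integration over $[0,1]$, and a separate lower-bound argument (Voronoi plus a maximal large sieve) to ensure $C_s^d>0$ for $s<1$ — the last point is something your scheme would also need once the construction of $\Psi_d$ is secured.
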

This result has several interesting corollaries in the case $\star = f$. Note that from the bound
$S_f(\alpha; X)\ll\sqrt{X}$, (\ref{eq:rankin_selberg}), and H\"older, we have that $\exp(-C_1s)\le C_s^f\le\exp(C_2s)$ for some $C_1, C_2 > 0$.
Then, by the method of moments (one may apply Theorem 9.2 in \cite{Gu}, for example), we obtain a limiting distribution for the magnitude of $\frac{1}{\sqrt{X}}S_f(\alpha; X)$ sum as $X\to\infty$.
\begin{corollary}
    Suppose $f$ is a holomorphic cusp form for $\SL_2(\ZZ)$. 
    Let $A_{X}$ be the random variable given by 
    \[
        \frac{1}{\sqrt{X}}\bigg|\sum_{n\le X} \lambda_f(n)e(n\alpha)\bigg|,
    \] 
    where $\alpha$ is chosen uniformly at random from $[0, 1]$. Then, there is a random variable 
    $A$ so that $A_X$ converges to $A$ in distribution as $X\to\infty$. 

    In particular, it follows that there exists a compactly supported measure $\mu$ on $[0, \infty)$ so that for any continuous $f:[0,\infty)\to \RR$, we have that
    \[
        \lim_{X\to\infty}\int_0^1 f\bigg(X^{-\frac 12 }\bigg|\sum_{n\le X} \lambda_f(n)e(n\alpha)\bigg|\bigg)d\alpha = \int fd\mu.
    \]
\end{corollary}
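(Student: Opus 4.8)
The plan is to deduce the corollary from Theorem \ref{thm:main_thm} by the method of moments, exactly along the lines sketched in the paragraph preceding the statement. Write $\mu_X$ for the law of $A_X$, a Borel probability measure on $[0,\infty)$. First I would note that Jutila's bound $S_f(\alpha;X)=O(\sqrt X)$, uniform in $\alpha$, produces a constant $M=M(f)$ with $A_X\le M$ for every $X$, so each $\mu_X$ is supported on the fixed compact interval $[0,M]$. Next, for each integer $m\ge 1$, Theorem \ref{thm:main_thm} applied with $s=m$ gives
\[
    \int_0^\infty x^m\,d\mu_X(x)=\int_0^1\bigg(X^{-1/2}\bigg|\sum_{n\le X}\lambda_f(n)e(n\alpha)\bigg|\bigg)^m d\alpha = C_m^fX^{m/2}\cdot X^{-m/2}+o(1)\longrightarrow C_m^f,
\]
and similarly for non-integer $s>0$. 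Thus every moment of $\mu_X$ converges as $X\to\infty$.

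Second, I would run the standard determinacy argument. Since all $\mu_X$ are supported in the fixed compact set $[0,M]$, the family $\{\mu_X\}$ is tight, so by Prokhorov's theorem (or Helly selection) any sequence $X_j\to\infty$ has a subsequence along which $\mu_{X_j}\Rightarrow\nu$ weakly, with $\nu$ again supported in $[0,M]$. Because $x\mapsto x^m$ is bounded and continuous on $[0,M]$, weak convergence forces $\int x^m\,d\nu=\lim_j\int x^m\,d\mu_{X_j}=C_m^f$ for all $m$. A probability measure on a compact interval is uniquely determined by its moments (polynomials are dense in $C([0,M])$ by Weierstrass), so $\nu$ does not depend on the chosen subsequence; call it $\mu$. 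Hence $\mu_X\Rightarrow\mu$ as $X\to\infty$, i.e.\ $A_X$ converges in distribution to a random variable $A$ with law $\mu$. (Alternatively, one may directly quote Theorem 9.2 of \cite{Gu}: its hypotheses are met because, by the remark preceding the statement, $C_s^f\le\exp(C_2s)$, so the limiting moment sequence satisfies Carleman's condition and the associated moment problem is determinate.)

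For the ``in particular'' clause, given a continuous $g:[0,\infty)\to\RR$ (the test function denoted $f$ in the statement), its restriction to $[0,M]$ is bounded and continuous; since $\mathrm{supp}\,\mu_X\subseteq[0,M]$ for all $X$ and $\mathrm{supp}\,\mu\subseteq[0,M]$, the weak convergence $\mu_X\Rightarrow\mu$ yields
\[
    \lim_{X\to\infty}\int_0^1 g\bigg(X^{-1/2}\bigg|\sum_{n\le X}\lambda_f(n)e(n\alpha)\bigg|\bigg)d\alpha=\lim_{X\to\infty}\int g\,d\mu_X=\int g\,d\mu,
\]
and $\mu$ is compactly supported in $[0,M]$, as required.

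I do not expect a serious obstacle here: the corollary is a soft consequence of Theorem \ref{thm:main_thm}. The only points needing (minor) care are (i) verifying that the limiting moment sequence $\{C_m^f\}$ genuinely arises from a unique probability measure — handled above either via the uniform support bound from Jutila or via Carleman's condition using $C_s^f\le e^{C_2s}$ — and (ii) keeping straight the equivalence, valid precisely because the $\mu_X$ are uniformly compactly supported, between convergence of moments, weak convergence of $\mu_X$, and convergence in distribution of $A_X$.
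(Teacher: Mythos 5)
Your proposal is correct and follows essentially the same route as the paper: the paper deduces the corollary from Theorem \ref{thm:main_thm} by the method of moments, citing Theorem 9.2 of \cite{Gu} together with the bound $C_s^f\le\exp(C_2 s)$ (which you also note as your alternative via Carleman's condition). Your primary variant, using Jutila's uniform bound to get all $\mu_X$ supported in a fixed interval $[0,M]$ and then tightness plus Weierstrass for determinacy, is just a slightly more self-contained way of verifying the same hypotheses, and it also cleanly justifies the compact support of $\mu$.
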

If we restrict the second part of the main theorem to the case $s = 2r$ for $r$ a positive integer, we also obtain the following by orthogonality:
\begin{corollary}
    For all positive integers $r$, we have that 
    \[
        \sum_{\substack{n_1 + \dots + n_r = m_1 + \dots + m_r\\ n_r,\dots,n_r,m_1,\dots,m_r\le X}} 
        \lambda_f(n_1)\dots\lambda_f(n_r)\conj{\lambda_f(m_1)\dots\lambda_f(m_r)} = C_f^{2r}X^r + O(X^{r - \eta_2}).
    \]
    for some constant $C_f^{2r} > 0$.
\end{corollary}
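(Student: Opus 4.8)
The plan is to deduce this directly from the second part of Theorem \ref{thm:main_thm} applied with $s = 2r$, combined with orthogonality of additive characters. First I would expand the $2r$-th power of the modulus as $|S_f(\alpha;X)|^{2r} = S_f(\alpha;X)^r\,\overline{S_f(\alpha;X)}^r$, and multiply out the two products of length $r$:
\[
|S_f(\alpha;X)|^{2r} = \sum_{\substack{n_1,\dots,n_r\le X\\ m_1,\dots,m_r\le X}} \lambda_f(n_1)\cdots\lambda_f(n_r)\,\overline{\lambda_f(m_1)\cdots\lambda_f(m_r)}\, e\bigl((n_1+\dots+n_r-m_1-\dots-m_r)\alpha\bigr).
\]
Since the sums are finite, one may integrate termwise over $\alpha\in[0,1]$; as $\int_0^1 e(k\alpha)\,d\alpha$ is $1$ when $k=0$ and $0$ otherwise, only the tuples with $n_1+\dots+n_r=m_1+\dots+m_r$ contribute, yielding
\[
\int_0^1 |S_f(\alpha;X)|^{2r}\,d\alpha = \sum_{\substack{n_1+\dots+n_r=m_1+\dots+m_r\\ n_1,\dots,n_r,m_1,\dots,m_r\le X}} \lambda_f(n_1)\cdots\lambda_f(n_r)\,\overline{\lambda_f(m_1)\cdots\lambda_f(m_r)}.
\]

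Next I would apply the second assertion of Theorem \ref{thm:main_thm} with $s=2r\ge 2$, which gives that the left-hand side equals $C_{2r}^f X^r + O_{r,f}(X^{r-\eta_2})$, and then set $C_f^{2r} := C_{2r}^f$. To obtain positivity of this constant, I would invoke the remark following the statement of Theorem \ref{thm:main_thm}: from $S_f(\alpha;X)\ll\sqrt X$, the Rankin–Selberg estimate \eqref{eq:rankin_selberg}, and Hölder's inequality one has $C_s^f\ge\exp(-C_1 s)>0$ for every $s>0$, so in particular $C_f^{2r}=C_{2r}^f>0$.

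There is essentially no genuine obstacle here beyond bookkeeping: all the analytic content lives in Theorem \ref{thm:main_thm}, and this corollary is simply the reinterpretation of its $s=2r$ case, via orthogonality, as an asymptotic for the additive energy of Hecke eigenvalues. The one point worth remarking is that for a holomorphic cusp form for $\SL_2(\ZZ)$ the eigenvalues $\lambda_f(n)$ are real, so the complex conjugation in the displayed identities is cosmetic; I would nonetheless keep it to match the statement as written.
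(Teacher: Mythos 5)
Your proposal is correct and coincides with the paper's own (very brief) argument: the corollary is obtained exactly by expanding $|S_f(\alpha;X)|^{2r}$, integrating over $[0,1]$ and using orthogonality, then invoking the $s=2r$ case of Theorem \ref{thm:main_thm}, with positivity of the constant coming from the remark that $C_s^f\ge\exp(-C_1 s)$. Nothing further is needed.
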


Our proof of the main theorem proceeds via an iterative method, which we sketch below. For $Q\asymp X^{\frac{1}{2} + \delta}$, the $L^s$ integral may 
be approximated by
\[
    \frac{1}{Q^2}\sum_{q\sim Q}\sumCp_{a(q)} \bigg|S_\star\left(\frac{a}{q}; X\right)\bigg|^s
\]
up to some constant factor (the range of $s$ where this works depends on the choice of $\star$). In our case, we achieve this via a version of Jutila's variant of the circle method in \cite{J1}.
Applying Voronoi summation, this roughly reduces to dealing with
\[
    \frac{1}{Q^2}\sum_{q\sim Q}\sumCp_{a(q)} \bigg|S_\star\left(\frac{a}{q}; \frac{q^2}{X}\right)\bigg|^s.
\]

Since $q$ is now much larger than the length of the sum, the inner sum amounts 
to integration over $[0, 1]$, and can be shown to be roughly
\[
    \varphi(q)\int_0^1\bigg|S_\star\prn{\alpha; \frac{q^2}{X}}\bigg|^sd\alpha.
\]
Ignoring the factor of $\varphi(q)$ for purpose of this discussion, note the inner sum varies very little as $q$ varies by small amounts, and so one obtains that
\[
    \frac{1}{Q^2}\sum_{q\sim Q}\sumCp_{a(q)} \bigg|S_\star\left(\frac{a}{q}; \frac{q^2}{X}\right)\bigg|^s\approx
    \frac{c_1}{Q}\int_{\tilde q\asymp Q} \int_0^1 \bigg|S_\star\prn{\alpha; \frac{\tilde q^2}{X}}|^sd\alpha d\tilde q
\]
for some $c_1 > 0$. The same method applied starting with sums of length $X'\asymp X$ with $Q' = \sqrt{X'/X}Q$ yields the same quantity times $(X'/X)^{\frac s2}$. 
The following approximate functional equation is obtained:
\begin{proposition}
    For $X\asymp X', 0 < s < 2, \star\in\set{d, f}$, we have that for some $\eta_1 > 0$
    \begin{equation}
        \int_0^1\bigg|\frac{1}{\sqrt{X}}S_\star(\alpha; X)\bigg|^sd\alpha = \int_0^1\bigg|\frac{1}{\sqrt{X'}}S_\star(\alpha; X')\bigg|^sd\alpha + O(X^{-\eta_1s(2 - s)}).
        \label{eq:app_fe_low_moment}
    \end{equation}
    Furthermore, for $s\ge 2$, for some $\eta_2 > 0$, we have that
    \begin{equation}
        \int_0^1\bigg|\frac{1}{\sqrt{X}}S_f(\alpha; X)\bigg|^sd\alpha = \int_0^1\bigg|\frac{1}{\sqrt{X'}}S_f(\alpha; X')\bigg|^sd\alpha + O(X^{-\eta_2}).
        \label{eq:app_fe_high_moment}
    \end{equation}
    \label{prop:app_fe}
\end{proposition}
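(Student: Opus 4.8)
The plan is to show that \emph{both} sides of (\ref{eq:app_fe_low_moment}) equal, up to an admissible error, one and the same linear functional of the family of normalized moments $N_\star(Y):=Y^{-s/2}\int_0^1|S_\star(\alpha;Y)|^s\,d\alpha$ with $Y$ ranging over a fixed short window; (\ref{eq:app_fe_low_moment}) will then follow by subtraction. I will fix a small $\delta=\delta(s)>0$, put $Q=X^{1/2+\delta}$ and $Q'=\sqrt{X'/X}\,Q$, so that $Q'^2/X'=Q^2/X$ and the two reductions land on the identical window $Y\asymp Q^2/X\asymp X^{2\delta}$. There are four steps: (A) replace $\int_0^1|S_\star(\cdot;X)|^s$ by an average over Farey fractions with denominator $q\sim Q$; (B) dualize each $S_\star(a/q+\beta;X)$ by Voronoi summation into a sum of length $\asymp q^2/X$; (C) for each $q$, average over $a$ to turn the sum over $\bar a/q$ into an integral over $[0,1]$; (D) replace the sum over $q$ by an integral and substitute $Y=\tilde q^2/X$.

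For step (A) I would use a smooth form of Jutila's variant of the circle method \cite{J1}: with $\psi_{\delta_0}$ a fixed even mass-one bump at scale $\delta_0=X^{-1+\epsilon_0}$ and $M=\sum_{q\sim Q}\varphi(q)\asymp Q^2$, one has the identity
\[
  \int_0^1|S_\star(\alpha;X)|^s\,d\alpha=\frac1M\sum_{q\sim Q}\sumCp_{a(q)}\int_{\RR}\psi_{\delta_0}(\beta)\,\bigl|S_\star(\tfrac aq+\beta;X)\bigr|^s\,d\beta+E_1,
\]
with $E_1=\frac1M\int_0^1|S_\star(\alpha;X)|^s\bigl(M-\sum_{q\sim Q}\sumCp_{a(q)}\psi_{\delta_0}(\alpha-\tfrac aq)\bigr)\,d\alpha$. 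Expanding the bracket as a Fourier series (whose coefficients are the Ramanujan sums $\sum_{q\sim Q}c_q(h)$, bounded by $\ll Q\,d(h)$) and pairing against $|S_\star|^s$ by Parseval bounds $E_1\ll\delta_0^{-1/2}Q^{-1}(\int_0^1|S_\star(\alpha;X)|^{2s}\,d\alpha)^{1/2}$ up to logarithms; for $\star=f$ this is a clean power saving since $|S_f|\ll\sqrt X$ \cite{J}, and for $\star=d$ one inputs $\sum_{n\le X}d(n)^2$ when $s\le1$, and for $1<s<2$ the major-arc asymptotics $\int_0^1|S_d|^{2s}\asymp X^{2s-1}(\log X)^{2s}$ of \cite{P}. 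For step (B) I would apply Voronoi summation (for $d$; the cuspidal analogue, with $J_{k-1}$, for $f$) to $S_\star(a/q+\beta;X)$ and evaluate the resulting Hankel-type integral by stationary phase: the stationary point lies at $x\asymp m/(q\beta)^2\le X$ precisely when the dual variable satisfies $m\ll q^2\beta^2X$, so, up to a rapidly decaying tail and --- for $\star=d$ only --- a main term of size $\ll(X/q)\log X$ that is independent of $a$, one gets
\[
  S_\star(\tfrac aq+\beta;X)=c_\star(X,q)\,\overline{S_\star\!\bigl(-\tfrac{\bar a}q+\theta_\beta;\,Y_{q,\beta}\bigr)}\cdot(\text{smooth amplitude of size }O(1)),
\]
where $|c_\star(X,q)|\asymp X/q$ (as forced by comparing second moments), $Y_{q,\beta}\asymp q^2/X$, and $\theta_\beta$ is a $\beta$-dependent shift; the $\star=d$ main term contributes $\ll X^{s/2-\delta\min(s,1)+o(1)}$ after summation over $a$, and the smooth amplitude is stripped off by Mellin inversion at the cost of lower-order terms.

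For step (C), fixing $q$, the map $a\mapsto-\bar a/q$ permutes the reduced residues, so I would be sampling $\alpha\mapsto|S_\star(\alpha+\theta_\beta;Y_{q,\beta})|^s$ on a Farey grid of mesh $\asymp1/q$; since $Y_{q,\beta}\ll X^{2\delta}$ stays below $q=X^{1/2+\delta}$, $S_\star(\cdot;Y_{q,\beta})$ is a trigonometric polynomial of degree below the sampling density, and a Riemann-sum comparison gives $\sumCp_{a(q)}|S_\star(-\tfrac{\bar a}q+\theta_\beta;Y_{q,\beta})|^s=\varphi(q)\int_0^1|S_\star(\alpha;Y_{q,\beta})|^s\,d\alpha+(\text{error})$, the error handled via $\bigl||z|^s-|w|^s\bigr|\ll_s|z-w|^{\min(s,1)}(|z|+|w|)^{(s-1)_{+}}$, the estimate $|S_\star(\alpha;Y)-S_\star(\alpha';Y)|\ll Y^2|\alpha-\alpha'|$, and H\"older. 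The $\beta$-integral then collapses to $\int\psi_{\delta_0}=1$. For step (D) I would compare $\sum_{q\sim Q}\varphi(q)|c_\star(X,q)|^s\int_0^1|S_\star(\alpha;Y_{q,\beta})|^s\,d\alpha$ with the integral over $\tilde q\asymp Q$ (using $\sum_{q\le t}\varphi(q)=\tfrac3{\pi^2}t^2+O(t\log t)$ and the slow variation of the summand) and substitute $Y=\tilde q^2/X$, under which $\tilde q\,d\tilde q=\tfrac X2\,dY$ and $|c_\star(X,\tilde q)|^s\asymp(X/Y)^{s/2}$; the whole expression becomes $c\,X^{s/2}\int_1^4 N_\star(X^{2\delta}u)\,du$ for a constant $c$, and dividing by $X^{s/2}$,
\[
  N_\star(X)=c\int_1^4 N_\star(X^{2\delta}u)\,du+O(\text{total error}).
\]
Running the same computation with $X'$ and modulus scale $Q'=\sqrt{X'/X}\,Q$: the substitution $\tilde q\mapsto\sqrt{X/X'}\,\tilde q$, together with $M'=\sum_{q\sim Q'}\varphi(q)$ satisfying $M'/M\asymp X'/X$, exactly absorbs the normalization, so the $X'$-version of the display has the \emph{identical} main term $c\int_1^4 N_\star(X^{2\delta}u)\,du$ (same $c$, same window, same integrand); subtracting yields (\ref{eq:app_fe_low_moment}).

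The main obstacle is tracking the error as a function of $s$, and it is concentrated in step (A) for $\star=d$. There $E_1\ll\delta_0^{-1/2}Q^{-1}(\int_0^1|S_d|^{2s})^{1/2}$ is of size $X^{s-1/2-\delta+o(1)}$ when $1<s<2$, so forcing $E_1=o(X^{s/2})$ requires $\delta$ up to essentially $(s-1)/2$, i.e.\ $Q$ as large as $\asymp X^{s/2}$. But then the dual length in steps (B)--(C) is $\asymp q^2/X\asymp X^{s-1}$, which approaches the modulus $q\asymp X^{s/2}$ as $s\to2$, so the margin $q/Y\asymp X^{1-s/2}$ that powers the Riemann-sum step collapses and the saving there degrades like a fixed power of $2-s$; combined with the $\min(s,1)$ loss in the H\"older inequality of step (C), which costs a factor $s$ in the exponent as $s\to0$, optimizing $\delta=\delta(s)$ against these competing constraints produces an error exponent of the shape $\eta_1 s(2-s)$. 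None of this occurs for $\star=f$: $|S_f|\ll\sqrt X$ makes $\int_0^1|S_f|^{2s}\ll X^s$ for every $s>0$, there is no Voronoi main term, and a fixed $\delta$ suffices throughout; for $s\ge2$ one moreover has $x\mapsto|x|^s$ of class $C^1$ so the $\min(s,1)$ loss vanishes, and the same pipeline yields the clean error $X^{-\eta_2}$ of (\ref{eq:app_fe_high_moment}).
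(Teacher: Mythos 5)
Your overall architecture is exactly the one the paper uses to prove Proposition \ref{prop:pre_app_fe} (and hence Proposition \ref{prop:app_fe}): a smoothed Jutila dissection at moduli $q\sim Q$ with $Q^2/X$ pinned to a fixed window $\asymp X^{2\delta}$, Voronoi summation to dualize, the observation that averaging over reduced residues $a \ (\mathrm{mod}\ q)$ is a Riemann sum for $\int_0^1$ because the dual length is tiny compared to $q$ (Lemma \ref{lem:coprime_riemann_sum}), replacement of the $q$-sum by a $\tilde q$-integral, and finally subtraction of the $X$- and $X'$-versions, which land on the identical functional of the window. So the plan is the right one, and your treatment of $\star=f$ and of $s\ge 2$ matches the paper's.

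However, there is a genuine gap in your error analysis for $\star=d$, $1<s<2$, i.e.\ exactly where the exponent $\eta_1 s(2-s)$ has to be produced. In step (A) you bound the dissection error by Cauchy--Schwarz against $\int_0^1|S_d|^{2s}$, which for $1<s<2$ is of size $X^{2s-1+o(1)}$; as you note, this forces $\delta$ up to $(s-1)/2$, i.e.\ $Q\asymp X^{s/2}$ and dual length $Y\asymp X^{s-1}$. But then your step (C) no longer closes with the tools you invoke: the Riemann-sum comparison with mesh $\asymp 1/q$ and the Lipschitz bound $|S_\star(\alpha;Y)-S_\star(\alpha';Y)|\ll Y^2|\alpha-\alpha'|$ gives a normalized per-point error $\asymp Y^{3/2}/q\asymp X^{s-3/2}$, which is not small once $s\ge 3/2$; the assertion that the loss ``degrades like a fixed power of $2-s$'' is not justified by anything you wrote, so the claimed exponent $\eta_1 s(2-s)$ is not established on this range. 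The paper avoids the problem entirely and never needs the $2s$-th moment of $S_d$: it bounds $|I_\star^s-\tilde I_\star^s|$ by H\"older with exponents $2/s$ and $2/(2-s)$, uses only Plancherel for $\int|S_\star|^2$, the pointwise bound $\tilde\chi\ll Q^{\eps}$, and the $L^2$ bound of Proposition \ref{prop:jut_circle_method}; this works with the fixed choice $\delta=1/100$ for all $0<s<2$, and the factor $s(2-s)$ comes out of that single interpolation, not from optimizing $\delta(s)$ against the downstream steps. Two smaller deviations are also worth flagging: your bump scale $\delta_0=X^{-1+\epsilon_0}$ makes the phase $e(n\beta)$ genuinely oscillate (so the dual length and amplitude become $\beta$-dependent, complicating the ``identical window'' matching between $X$ and $X'$), whereas the paper takes $\Delta\asymp X^{-1-\delta}$, removes the phase outright, and controls the resulting partial-sum errors with Montgomery's maximal large sieve (Lemma \ref{lem:jut_nophase}); and there is no need to ``strip the smooth amplitude by Mellin inversion'' --- the paper simply keeps the Voronoi weight $\mathcal{I}_\star w$ inside the common limiting functional, which is both simpler and safer. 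With your step (A) replaced by the paper's H\"older argument (and the phase handled at the smaller scale), the rest of your outline goes through.
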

Iterating this yields the main theorem. The details of how this implies the main theorem are shown in Section \ref{sec:proof_mainthm}, 
and the proof of Proposition \ref{prop:app_fe} in Section \ref{sec:app_fe_pf}. 
The next section is devoted to a few technical results that are used in the proof.

We remark that our result is related to a result of Jurkat and van Horne \cite{JH} in which 
a limiting distribution is found for magnitude of the exponential sum 
\[
    \bigg|\sum_{n\le X} e(n^2\alpha)\bigg|.
\]
Jurkat and van Horne's methods appear to be quite different from ours, though both 
our result and their result involve applying a Farey dissection and using Poisson
summation (Voronoi summation in our case) on what remains. 

We expect that using our methods applied to the exponential sum with coefficients 
equal to $1*\chi_4$, with $\chi_4$ the character of conductor $4$, we can
strengthen Theorem 4 of \cite{JH}. Specifically, one has that for $0 < s < 4$ and 
some $\delta > 0$:
\begin{equation}
    \int_0^1 \bigg|\sum_{n\le X} e(n^2\alpha)\bigg|^sd\alpha 
    = c_s X^{s/2} + O(X^{\frac{s}{2}(1 - (4 - s)\delta)}),
    \label{eq:JH_strengthen}
\end{equation}
with $c_\alpha$ as in Theorem 4 of \cite{JH}. 
We expect our methods also apply to the case of exponential sums with coefficients 
the Fourier coefficients of Maass forms and half integral weight forms, as we use nothing 
about $f$ besides its modularity via Voronoi summation. 
Such improvements and generalizations should be straightforward and we leave the details to the interested reader.
 
\subsection{Notation and conventions}

As usual, we use Vinogradov's notation $A\ll B$ (equivalently $B\gg A$) to denote that $|A|\le CB$ for some constant $C > 0$. When we use $\eps$ in a statement, we mean that the statement holds for all $\eps > 0$. 
For the purposes of this paper, this $C$ will depend only on $f, s, \eps$, unless specified otherwise.
Any further dependencies will be specified in subscript beneath the $\ll$.
We write $A\asymp B$ to denote that $A\ll B, B\ll A$. In addition, we write $a\sim A$ to denote $A < a\le 2A$. We write
\[\sumCp_{a(q)}\]
to denote a sum over $0\le a < q$ with $(a, q) = 1$.
For convenience, for $\star\in\set{d, f}$, we write 
\[\lambda_\star(n) = \begin{cases} d(n) & \star = d\\ \lambda_f(n) & \star = f\end{cases}.\]

\section{Standard technical lemmas}
We shall use Voronoi summation as stated below, along with some properties of the integral transforms involved. 
These are well-known, and the final bounds follow from repeated integration by parts and trivial bounds 
(see \S2 of \cite{FGKM}, for example).
\begin{proposition}
    Let $w$ be smooth and supported on positive reals, $q\ge 1$ be prime, and $(a, q) = 1$. Then, we have
    \begin{align*}
        \sum_{n\ge 1} d(n)e\pfrc{an}{q}&w\pfrc{n}{X} = \frac{1}{q}\int (\log(x/q^2) + 2\gamma)w\pfrc{n}{X}dx \\
        &+ \frac{X}{q}\sum_{n\ge 1} d(n)e\prn{-\frac{\conj an}{q}}\mc I_d w\pfrc{n}{q^2/X},
    \end{align*}
    \[\sum_{n\ge 1} \lambda_f(n)e\pfrc{an}{q}w\pfrc{n}{X} = \frac{X}{q}\sum_{n\ge 1} \lambda_f(n)e\prn{-\frac{\conj an}{q}}\mc I_f w\pfrc{n}{q^2/X}\]
    where for $\star\in\set{d, f},$
    \[
        \mc I_d w(x) = \int w(x)(4K_0(4\pi\sqrt{x}) - 2\pi Y_0(4\pi\sqrt{x}))dx,
    \]
    \[
        \mc I_f w(x) = \int w(x) J_{k - 1}(4\pi\sqrt{x}) dx.
    \]
    These transforms also satisfy the property that if $w$ is supported on values $\asymp 1$ and $w^{(j)}\ll H^j$ for $j\ge 1$, then 
    $\mc I_\star w\ll 1$, and 
    $(\mc I_\star w)^{(j)}\ll H^j$. Also, for $\delta > 0, |x|\ge H^{1 + \delta}$, we have that 
    $w^{(j)}(x)\ll_{\delta, A} H^{-A}$.
    \label{prop:voronoi_summation}
\end{proposition}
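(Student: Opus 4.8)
The plan is to derive both Voronoi identities from the analytic continuation and functional equation of the additively twisted Dirichlet series attached to $d$ and to $f$, and then to read the analytic properties of the Hankel-type kernels $\mc I_\star$ off the classical asymptotics of the $J$-, $Y$-, and $K$-Bessel functions. Concretely, write $W(s)=\int_0^\infty w(x)x^{s-1}\,dx$ for the Mellin transform; since $w$ is smooth, compactly supported in $(0,\infty)$, and has $w^{(j)}\ll H^j$, repeated integration by parts shows $W$ is entire with $W(s)\ll_A(1+|s|/H)^{-A}$ on every vertical line. By Mellin inversion,
\[
  \sum_{n\ge1}\lambda_\star(n)e\pfrc{an}{q}w\pfrc{n}{X}=\frac{1}{2\pi i}\int_{\mathrm{Re}\,s=2}W(s)\,X^{s}\,D_\star(s;a/q)\,ds,
\]
where $D_d(s;a/q)=\sum_{n\ge1}d(n)e(an/q)n^{-s}$ is the Estermann zeta function and $D_f(s;a/q)=\sum_{n\ge1}\lambda_f(n)e(an/q)n^{-s}$ is the additive twist of $L(s,f)$.

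For $q$ prime and $(a,q)=1$ one has clean functional equations. The series $D_f(s;a/q)$ is entire and, up to explicit elementary factors, satisfies a functional equation relating it to $D_f(1-s;-\bar a/q)$ with the weight-$k$ gamma factor $(2\pi)^{-s}\Gamma(s+\tfrac{k-1}{2})$; this is just the modular transformation of $f$ under a matrix of $\SL_2(\ZZ)$ carrying $-\bar a/q$ to $\infty$, rephrased for Dirichlet series. The series $D_d(s;a/q)$ continues meromorphically with its only pole a double one at $s=1$, of polar part $\tfrac1q\bigl(\tfrac1{(s-1)^2}+\tfrac{2\gamma-2\log q}{s-1}\bigr)$, and satisfies a functional equation with gamma factor $\Gamma(\tfrac{1-s}{2})^2/\Gamma(\tfrac s2)^2$ relating it, again up to explicit factors, to $D_d(1-s;-\bar a/q)$. (Equivalently the $d$-formula comes from applying Poisson summation twice to $d=1*1$ and the $f$-formula directly from the modular transformation just mentioned; I would import the analytic input from \cite{IK}.) Now move the contour to $\mathrm{Re}\,s=-\tfrac12$. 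For $\star=f$ nothing is crossed; for $\star=d$ the residue at $s=1$ is $\tfrac1q\bigl(W'(1)X+W(1)X\log X+(2\gamma-2\log q)W(1)X\bigr)$, which after the substitution $x\mapsto x/X$ is exactly the stated main term $\tfrac1q\int_0^\infty(\log(x/q^{2})+2\gamma)w(x/X)\,dx$. On the shifted line, apply the functional equation, re-expand $D_\star(1-s;-\bar a/q)$ into its Dirichlet series, and interchange sum and integral (justified by the decay of $W$ and Stirling); the $n$-th term becomes $\tfrac1q\lambda_\star(n)e(-\bar a n/q)$ times an inverse Mellin integral of $W$ against the gamma factors and a power of $nX/q^2$, which the Mellin--Barnes representations of $J_{k-1}$, $Y_0$, $K_0$ identify with $X\,\mc I_\star w\bigl(n/(q^2/X)\bigr)$.

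It remains to estimate the kernels, where $\mc I_f w(y)=\int_0^\infty w(x)J_{k-1}(4\pi\sqrt{xy})\,dx$ and $\mc I_d w(y)=\int_0^\infty w(x)\bigl(4K_0(4\pi\sqrt{xy})-2\pi Y_0(4\pi\sqrt{xy})\bigr)dx$. Since $K_0$ decays exponentially its contribution is negligible unless $y\ll1$; on the range $4\pi\sqrt{xy}\ll1$ (i.e.\ $y\ll1$, since $x\asymp1$) the remaining Bessel factors are $O(1)$ once the logarithmic behaviour of $Y_0$ and $K_0$ at the origin is taken into account, so $\mc I_\star w\ll1$ there, and differentiating under the integral sign, using $\tfrac{d}{du}(u^{\nu}J_\nu(u))=u^{\nu}J_{\nu-1}(u)$ and its analogues, shows each $y$-derivative costs at most one factor of $H$ (from hitting $w$), so $(\mc I_\star w)^{(j)}\ll H^j$. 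For larger $y$, insert the asymptotic $J_{k-1}(u),Y_0(u)\ll u^{-1/2}$ with leading term $\asymp u^{-1/2}e^{\pm iu}$; this reduces matters to oscillatory integrals $\int w(x)(xy)^{-1/4}e^{\pm 4\pi i\sqrt{xy}}\,dx$ whose phase has $x$-derivative $\asymp\sqrt y$, so repeated integration by parts (each step costing $\ll\max(H,\sqrt y)/\sqrt y$ and differentiating $w$ or the smooth amplitude) gives $\ll_A(H/\sqrt y)^A$, hence $\ll_{\delta,A}H^{-A}$ once $y\ge H^{1+\delta}$; differentiating first in $y$ and then repeating handles $(\mc I_\star w)^{(j)}$ on the tail. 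All of this is exactly the content of \S2 of \cite{FGKM}.

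There is no conceptual obstacle — every step is classical — so the real work is bookkeeping. The two places that reward care are (i) evaluating the residue of $W(s)X^sD_d(s;a/q)$ at the double pole so that it reproduces $\log(x/q^{2})+2\gamma$ on the nose, which forces one to track both the $q$-power in the polar part of the Estermann zeta function and the factor $X^s$ inside the residue; and (ii) organizing the integrations by parts in the kernel estimates so that the exponent of $H$ comes out as claimed in both the smoothness and tail bounds. For a clean write-up one simply cites \cite{IK} for the functional equations and the pole, and \cite{FGKM} for the kernel bounds.
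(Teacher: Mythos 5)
Your main derivation is the classical route and is exactly what the paper itself appeals to (it offers no proof, citing \S 2 of \cite{FGKM} and ``repeated integration by parts''): the Mellin set-up, the double pole of the Estermann function with polar part $\frac1q\bigl(\frac{1}{(s-1)^2}+\frac{2\gamma-2\log q}{s-1}\bigr)$, the entire additive twist for $f$, and the residue computation matching $\frac1q\int(\log(x/q^2)+2\gamma)w(x/X)\,dx$ are all correct.

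The step that does not hold as written is the tail bound for the kernels. Non-stationary phase gives $\mc I_\star w(y)\ll_A y^{-1/4}(H/\sqrt y)^{A}$ (each pass costs $\ll H/\sqrt y$; your stated per-step cost $\max(H,\sqrt y)/\sqrt y$ equals $1$ once $\sqrt y\ge H$ and would yield no decay at all), and from $y\ge H^{1+\delta}$ one only gets $H/\sqrt y\le H^{(1-\delta)/2}$, which does not go to zero; rapid decay requires $\sqrt y\gg H^{1+\delta'}$, i.e.\ $y\gg H^{2+\delta}$. The claim at threshold $H^{1+\delta}$ is in fact false: take $w(x)=\psi(x)e(-2H\sqrt x)$ with $\psi$ a fixed bump, so $w^{(j)}\ll H^j$, and stationary phase gives $\mc I_f w(y)\asymp y^{-1/4}\asymp H^{-1/2}$ for $y$ near $(2\pi)^{-2}H^2$, which lies above $H^{1+\delta}$. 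So the proposition itself is misstated here (its last sentence also writes $w^{(j)}$ where $\mc I_\star w$ or its derivatives are meant); the form actually needed later in the paper is decay for arguments $\gg H^{2}X^{\eps}$ (the truncation at $X_1X^{2\delta_1}=X_1H^2$ in Section 4), which your argument does prove once the exponent is corrected, and the corrected cutoff changes nothing downstream. A smaller point: for $\star=d$ the logarithms of $K_0$ and $Y_0$ at the origin add rather than cancel, so for very small $y$ one only gets $\mc I_d w(y)\ll 1+\log(1/y)$ rather than $O(1)$; this is harmless in every application but should be said if you assert the uniform bound.
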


We shall also require and prove a modified version of Jutila's circle method \cite{J1}. It slightly improves the error term 
of Jutila's result slightly in some cases, at the cost of requiring a smoothing.
\begin{proposition}
    Let $\mc Q$ be a set of integers $\asymp Q$. Let $\Delta = \frac{H}{Q^2}$ for some $H\gg 1$. Also, suppose that $\phi$ is some nonzero smooth compactly supported function on $\RR$. Write
    \[L = \sum_{q\in\mc Q}\varphi(q),\]
    \[\tilde\chi(\alpha) = \frac{1}{\hat\phi(0)\Delta L}\sum_{q\in\mc Q}\sumCp_{a(q)}\phi(\Delta^{-1}(\alpha - a/q)).\]
    Then, we have that 
    \[\int_0^1 |1 - \tilde\chi(\alpha)|^2d\alpha\ll \frac{Q^4}{HL^2} + \frac{Q^{2 + \eps}}{L^2}.\]
    \label{prop:jut_circle_method}
\end{proposition}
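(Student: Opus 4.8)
The plan is to expand the $L^2$ norm by Plancherel/Parseval on the circle, or more directly to compute $\int_0^1 |1 - \tilde\chi(\alpha)|^2\,d\alpha = \int_0^1 1\,d\alpha - 2\operatorname{Re}\int_0^1 \tilde\chi(\alpha)\,d\alpha + \int_0^1 |\tilde\chi(\alpha)|^2\,d\alpha$. The first term is $1$; by the normalization $\int_0^1 \tilde\chi(\alpha)\,d\alpha = \frac{1}{\hat\phi(0)\Delta L}\sum_{q\in\mc Q}\sumCp_{a(q)}\int_{\RR}\phi(\Delta^{-1}(\alpha - a/q))\,d\alpha = \frac{1}{\hat\phi(0)\Delta L}\cdot L\cdot \Delta\hat\phi(0) = 1$ (using that $\phi$ is compactly supported so the bumps at distinct Farey points integrate independently, and $\sum_{q}\sumCp_{a(q)} 1 = L$). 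So the problem reduces to showing $\int_0^1 |\tilde\chi(\alpha)|^2\,d\alpha = 1 + O(Q^4/(HL^2) + Q^{2+\eps}/L^2)$.

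For the main term I would pass to Fourier coefficients: writing $\tilde\chi(\alpha) = \sum_{n\in\ZZ} c_n e(n\alpha)$, Parseval gives $\int_0^1 |\tilde\chi|^2 = \sum_n |c_n|^2$. Computing $c_n = \int_0^1 \tilde\chi(\alpha)e(-n\alpha)\,d\alpha = \frac{1}{\hat\phi(0)\Delta L}\sum_{q\in\mc Q}\sumCp_{a(q)} e(-na/q)\cdot \Delta\hat\phi(\Delta n) = \frac{\hat\phi(\Delta n)}{\hat\phi(0)L}\sum_{q\in\mc Q} c_q(n)$, where $c_q(n) = \sumCp_{a(q)} e(-na/q)$ is the Ramanujan sum. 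Thus $\sum_n |c_n|^2 = \frac{1}{\hat\phi(0)^2 L^2}\sum_n |\hat\phi(\Delta n)|^2 \big|\sum_{q\in\mc Q} c_q(n)\big|^2$. The $n = 0$ term contributes $\frac{1}{\hat\phi(0)^2 L^2}\cdot\hat\phi(0)^2\cdot(\sum_q \varphi(q))^2 = 1$, which is the main term. It remains to bound the contribution of $n\ne 0$ by the stated error.

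For the tail $n\ne 0$, since $\phi$ is smooth and compactly supported, $\hat\phi$ decays faster than any polynomial, so $|\hat\phi(\Delta n)|^2$ is negligible once $|n|\gg \Delta^{-1}(\log)^{O(1)} = (Q^2/H)\cdot(\text{log factor})$; effectively we may truncate to $1\le |n|\ll Q^2/H$ up to acceptable error, and on this range $|\hat\phi(\Delta n)|\ll 1$. So we must bound $\frac{1}{L^2}\sum_{1\le |n|\ll Q^2/H}\big|\sum_{q\in\mc Q} c_q(n)\big|^2$. Opening the square and swapping order, this is $\frac{1}{L^2}\sum_{q_1, q_2\in\mc Q}\sum_{1\le|n|\ll Q^2/H} c_{q_1}(n)\conj{c_{q_2}(n)}$. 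Using $c_q(n) = \sum_{d\mid(n,q)} d\,\mu(q/d)$ and the standard bound $\sum_{|n|\le N} c_{q_1}(n)c_{q_2}(n) \ll N\cdot(q_1,q_2) + q_1 q_2\,(q_1q_2)^\eps$ (the diagonal in the Ramanujan-sum convolution plus an off-diagonal of size the moduli), summing over $q_1, q_2\asymp Q$ gives $\ll \frac{1}{L^2}\big( (Q^2/H)\cdot Q^2\cdot Q^{\eps} + Q^2\cdot Q^2\cdot Q^{\eps}\big) = \frac{Q^{4+\eps}}{HL^2} + \frac{Q^{4+\eps}}{L^2}$; a more careful treatment of the $(q_1,q_2)$ factor — summing $(q_1,q_2)$ over $q_i\asymp Q$ contributes an extra $Q^2$ rather than $Q^3$ — yields the sharper $\frac{Q^4}{HL^2} + \frac{Q^{2+\eps}}{L^2}$ claimed. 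I expect the main obstacle to be precisely this bookkeeping: getting the clean power $Q^{2+\eps}$ (and not $Q^{3+\eps}$) in the second error term requires exploiting that $c_q(n)$ vanishes unless $q\mid$ (something), so the off-diagonal contribution in $n$ is controlled by $\gcd$-type sums over $q_1, q_2$ that must be summed carefully, together with making the $\hat\phi$-truncation fully rigorous so that the smoothness of $\phi$ is actually used to kill the range $|n|\gg Q^2/H$.
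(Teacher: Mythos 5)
Your overall route is essentially the paper's: subtracting the mean and applying Parseval is exactly the paper's Poisson-summation step, and both arguments reduce to bounding $\frac{1}{L^2}\sum_{n\neq 0}|\hat\phi(\Delta n)|^2\big|\sum_{q\in\mc Q}c_q(n)\big|^2$ after truncating at $|n|\ll \Delta^{-1}$ using the rapid decay of $\hat\phi$. The genuine gap is in your final estimate. The correlation bound $\sum_{|n|\le N}c_{q_1}(n)\overline{c_{q_2}(n)}\ll N(q_1,q_2)+(q_1q_2)^{1+\eps}$ is correct, but its off-diagonal term $(q_1q_2)^{1+\eps}$, summed over the $\asymp Q^2$ pairs $q_1,q_2\asymp Q$, contributes $Q^{4+\eps}/L^2$, not $Q^{2+\eps}/L^2$. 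Since $L\asymp Q^2$ in the application, $Q^{4+\eps}/L^2\asymp Q^{\eps}$, which is useless there (one needs a negative power of $X$); this is a loss of a full factor $Q^2$, not bookkeeping. Your proposed repair --- treating $\sum_{q_1,q_2}(q_1,q_2)$ more carefully --- only affects the diagonal term $N(q_1,q_2)$ (and that gcd sum is genuinely $\asymp Q^2\log Q$, so it cannot remove the $\eps$ there either); it does not touch the $(q_1q_2)^{1+\eps}$ term, so the claimed final bound does not follow from what you wrote.

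The missing idea is that in the error term of the correlation estimate only divisor pairs $d_1\mid q_1$, $d_2\mid q_2$ with $[d_1,d_2]\le 2N$ contribute (no nonzero $|n|\le N$ is divisible by a larger lcm), and for such pairs $d_1d_2=(d_1,d_2)[d_1,d_2]\ll N(q_1,q_2)$; hence the error is $\ll N(q_1,q_2)d(q_1)d(q_2)$, merges with the diagonal, and the total becomes $\ll NQ^{2+\eps}\ll Q^{4+\eps}/H$, i.e. $\ll Q^{4+\eps}/(HL^2)$, which is all that is needed downstream since $H$ is a fixed power of $X$. The paper instead avoids opening the square over $q_1,q_2$ at the level of correlations: it writes $\sum_{q\in\mc Q}c_q(\ell)=\sum_{d\mid\ell}M_d$ with $M_d=\sum_{dq_1\in\mc Q}\mu(q_1)\ll Q/d$, squares, and counts multiples of $[d_1,d_2]$ among $|\ell|\ll K\Delta^{-1}$; because the weight attached to each pair $(d_1,d_2)$ decays in $d_1,d_2$, the ``$+1$'' from that count produces only the small $Q^{2+\eps}$ term. (Note, though, that the paper's completely clean $Q^4/(HL^2)$ rests on using $c_q(\ell)=\sum_{d\mid(q,\ell)}\mu(q/d)$ without the factor $d$; with the correct identity $c_q(\ell)=\sum_{d\mid(q,\ell)}d\,\mu(q/d)$ either route gives $Q^{4+\eps}/(HL^2)$, which is still ample for the application, so the stray $\eps$ on the first term is not the issue --- the factor $Q^2$ in your second term is.)
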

\begin{proof}
    By Poisson summation, for $(a, q) = 1$ we have
    \[\phi(\Delta^{-1}(\alpha - a/q)) = \Delta\hat\phi(0) + \Delta\sum_{|\ell| > 0} \hat\phi(\Delta\ell) e\left(\frac{a\ell}{q}\right)e(-\ell\alpha),\]
    so
    \begin{align*}
        \tilde\chi(\alpha) - 1 &= \frac{1}{\hat\phi(0)L}\sum_{q\in \mc Q}\sumCp_{a(q)}\sum_{|\ell| > 0} \hat\phi(\Delta\ell) e\left(\frac{a\ell}{q}\right)e(-\ell\alpha)\\
        &= \frac{1}{\hat\phi(0)L}\sum_{|\ell| > 0}\hat\phi(\Delta\ell)e(-\ell\alpha)\sum_{q\in \mc Q} c_q(\ell)
    \end{align*}
    where 
    \[c_q(\ell) = \sumCp_{a(q)} e\left(\frac{a\ell}q\right)\]
    is the usual Ramanujan sum. Using the fact that $c_q(\ell) = \sum_{d|(q, \ell)}\mu(q/d)$, we obtain that
    \[
        \tilde\chi(\alpha) - 1 = \frac{1}{L}\sum_{|\ell| > 0}\hat\phi(\Delta\ell) e(\ell\alpha)\sum_{d | \ell}\sum_{dq_1\in \mc Q}\mu(q_1).
    \]
    By Plancherel, and the bound $\hat\phi(t)\ll_A (1 + |t|)^{-A}$ which holds for all $A > 0$, it follows that for some sufficiently large $C$
    \begin{align*}
        \int_0^1 |\tilde\chi(\alpha) - 1|^2d\alpha &= \frac{1}{\hat\phi(0)^2L^2}\sum_{|\ell| > 0} |\hat\phi(\Delta\ell)|^2\bigg(\sum_{d|\ell}\sum_{dq_1\in \mc Q}\mu(q_1)\bigg)^2\\
        &\ll \frac{1}{L^2}\sum_{|\ell| > 0} |\hat\phi(\Delta\ell)|^2\bigg(\sum_{\substack{d|\ell\\ d\ll Q}}\frac{Q}{d}\bigg)^2\\
        &\ll \frac{1}{L^2}\sum_{K} \frac{1}{K^{10}}\sum_{|\ell|\le K\Delta^{-1}}\bigg(\sum_{\substack{d|\ell\\ d\ll Q}}\frac{Q}{d}\bigg)^2.
    \end{align*}
    where $K$ runs over powers of two. 
    Note that
    \begin{align*}
        \sum_{|\ell|\le K\Delta^{-1}} \bigg(\sum_{\substack{d|\ell\\ d\ll Q}}\frac{Q}{d}\bigg)^2 &= \sum_{d_1, d_2\ll Q}\frac{Q^2}{d_1d_2}\sum_{\substack{\ell\le K\Delta^{-1}\\ [d_1, d_2] | \ell}} 1\\
        &\le K\sum_{d_1, d_2\ll Q}\frac{Q^2}{d_1d_2}\cdot\bigg(\frac{\Delta^{-1}}{[d_1, d_2]} + 1\bigg)\\ 
        &\ll KQ^2\Delta^{-1}\sum_{d_1, d_2\ll Q}\frac{(d_1, d_2)}{d_1^2d_2^2} + KQ^{2 + \eps}\\ 
        &\ll KQ^2\Delta^{-1}\sum_{d_1, d_2\ll Q}\frac{1}{d_1^2d_2^2}\sum_{a|(d_1, d_2)}\varphi(a) + KQ^{2 + \eps}\\ 
        &\ll KQ^2\Delta^{-1}\sum_{a}\frac{\phi(a)}{a^3}\sum_{d_1',d_2'}\frac{1}{d_1'^2}\frac{1}{d_2'^2} + KQ^{2 + \eps}\\
        &\ll K(Q^2\Delta^{-1} + Q^{2 + \eps}).
    \end{align*}
    The desired result follows upon summing over $K$.
\end{proof}

\section{Proof of the main theorem}
\label{sec:proof_mainthm}

In this section, we prove the main theorem assuming Proposition \ref{prop:app_fe}, and in the following section, we prove 
Proposition \ref{prop:app_fe}.
Iterating Proposition \ref{prop:app_fe}, we obtain that for all $0 < s < 2, Y\ge X$ 
\begin{equation}
    \int_0^1\bigg|\frac{1}{\sqrt{X}}S_\star(\alpha; X)\bigg|^sd\alpha = \int_0^1\bigg|\frac{1}{\sqrt{Y}}S_\star(\alpha; Y)\bigg|^sd\alpha
    + O(X^{-\eta_1 s(2 - s)})
    \label{eq:cauchy_low_moment}
\end{equation}
and that for $s \ge 2$
\begin{equation}
    \int_0^1\bigg|\frac{1}{\sqrt{X}}S_f(\alpha; X)\bigg|^sd\alpha = \int_0^1\bigg|\frac{1}{\sqrt{Y}}S_f(\alpha; Y)\bigg|^sd\alpha
    + O(X^{-\eta_2}).
    \label{eq:cauchy_high_moment}
\end{equation}
In particular, the sequence 
\[
    X\mapsto\int_0^1 \bigg|\frac{1}{\sqrt{X}}\sum_{n\le X}\lambda_\star(n)e(n\alpha)\bigg|^sd\alpha
\]
is a Cauchy sequence for all $s\ge 2$ when $\star = f$ and for $0 < s < 2$ 
for general $\star\in\set{d, f}$. 

Taking the limit as $Y\to\infty$ in (\ref{eq:cauchy_low_moment}) and
(\ref{eq:cauchy_high_moment}), we have that for $0 < s < 2,\star\in\set{d, f}$
\[
    \int_0^1\bigg|\frac{1}{\sqrt{X}}S_\star(\alpha; X)\bigg|^sd\alpha = C_s^\star + O(X^{-\eta_1 s(2 - s)}).
\]
for some constants $C_s^\star, \eta_1 > 0$, and for $s\ge 2$
\[
    \int_0^1\bigg|\frac{1}{\sqrt{X}}S_f(\alpha; X)\bigg|^sd\alpha = C_s^f + O(X^{-\eta_2})
\]
for some constants $C_s^f, \eta_2 > 0$.

Thus, Theorem \ref{thm:main_thm} follows if we can show that 
\begin{align}
    \int_0^1\bigg|\frac{1}{\sqrt{X}}S_\star(\alpha; X)\bigg|^sd\alpha\gg 1 && (s > 0)
    \label{eq:lower_bound_all}
\end{align}

(\ref{eq:lower_bound_all}) follows when $\star = f$ from H\"older with the bounds $S_f(\alpha; X)\ll\sqrt{X}$ (Proposition \ref{prop:jut_bound}) and (\ref{eq:rankin_selberg}).
However, one does not have such bounds for $S_d$, so the rest of this section is dedicated to the case of $\star = d$.

In \cite{GP}, it was shown that 
\begin{equation}
    \int_0^1|S_d(\alpha; X)|d\alpha\gg\sqrt{X}.
    \label{eq:lower_bound_d_L1}
\end{equation}
It follows from H\"older that (\ref{eq:lower_bound_all}) holds for $s\ge 1$. Thus, it remains to show:
\begin{proposition}
    We have
    \[\int_0^1|S_d(\alpha; X)|^sd\alpha\gg X^{\frac{s}{2}}.\]
    for $s < 1$.
    \label{prop:lower_bd_d_low}
\end{proposition}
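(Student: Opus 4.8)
The plan is to establish the lower bound $\int_0^1 |S_d(\alpha;X)|^s\,d\alpha\gg X^{s/2}$ for $0<s<1$ by a reverse H\"older argument, playing off the $L^1$ lower bound \eqref{eq:lower_bound_d_L1} against an upper bound for a higher moment, say the $L^2$ moment $\int_0^1 |S_d(\alpha;X)|^2\,d\alpha \ll X(\log X)^3$ coming from Plancherel and $\sum_{n\le X}d(n)^2\sim\pi^{-2}X(\log X)^3$. The issue is that a naive interpolation between $L^s$ and $L^2$ to bound $L^1$ from above does not directly give a \emph{lower} bound on $\|S_d\|_s$ of the right order, because of the stray logarithmic factor in the $L^2$ moment. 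So the first step is to remove that logarithm: I would work not with $S_d(\alpha;X)$ itself but with a related sum whose second moment is genuinely of size $X$ up to constants, obtained either by restricting $\alpha$ to a set where $S_d$ is of size $\asymp\sqrt X$, or more robustly by using the structure of the proof of \eqref{eq:lower_bound_d_L1} in \cite{GP}, which should actually produce a subset $E\subseteq[0,1]$ of measure $\asymp 1$ on which $|S_d(\alpha;X)|\asymp\sqrt X$, or at least $\gg\sqrt X$ on a set of measure $\gg 1/(\log X)^{O(1)}$.

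Concretely, here is the cleanest route I would try first. The proof of \eqref{eq:lower_bound_d_L1} presumably shows that $|S_d(\alpha;X)|\gg\sqrt X$ on the minor-arc-type set $\alpha\in[X^{-1/2-\epsilon_0}, 2X^{-1/2-\epsilon_0}]$ (or a union of such intervals near rationals $a/q$ with $q$ a bit below $\sqrt X$), a set $E$ of measure $\asymp X^{-1/2-\epsilon_0}$; integrating $|S_d|^s\ge (c\sqrt X)^s$ over $E$ already gives $\gg X^{s/2-1/2-\epsilon_0}$, which is too lossy. To do better I would instead localize near every rational $a/q$ with $q\le\sqrt X$: on an interval of length $\asymp 1/(qX^{1/2})$ around each such $a/q$ one expects $|S_d(\alpha;X)|\gg \sqrt X$ (this is the standard major-arc heuristic, since $S_d(a/q;X)\asymp (X/q)\log X$ dominates when $q$ is small, and by continuity the sum stays large on a short interval). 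Summing the contributions $\gg X^{s/2}\cdot (\text{measure})$ over $q\le\sqrt X$ and $a$ coprime to $q$ gives total measure $\gg \sum_{q\le\sqrt X}\varphi(q)/(qX^{1/2})\gg 1$, hence $\int_0^1 |S_d(\alpha;X)|^s\,d\alpha\gg X^{s/2}$ for every $s>0$. One must be careful that these intervals are disjoint (they are, by a spacing argument, once the interval lengths are taken a touch shorter, $\asymp 1/(q\sqrt X)$ with $q\le cX^{1/2-\delta}$, say), and that the lower bound $|S_d|\gg\sqrt X$ genuinely holds throughout each interval, which follows from $S_d(a/q;X)\gg (X/q)\log X\gg\sqrt X$ together with the derivative bound $\partial_\alpha S_d(\alpha;X)\ll X^{3/2}$ on an interval of length $\ll X^{-1}$ — so actually one should shrink to intervals of length $\asymp 1/X$ around each $a/q$, giving total measure $\gg \sum_{q\le cX^{1/2-\delta}}\varphi(q)/X\gg X^{-2\delta}$, still enough if we only want $\gg X^{s/2-\epsilon}$, but to get the clean $\gg X^{s/2}$ one needs the interval length to decay like $1/(q\sqrt X)$, which requires the stability estimate $|S_d(a/q;X)-S_d(\alpha;X)|\le \tfrac12 |S_d(a/q;X)|$ for $|\alpha-a/q|\le c/(q\sqrt X)$.

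The main obstacle, then, is exactly this stability estimate: showing that for $q\le cX^{1/2-\delta}$ and $|\beta|\le c/(q\sqrt X)$ one has $S_d(a/q+\beta;X)\gg (X/q)\log X \gg \sqrt X$. I would prove it by the standard splitting $S_d(a/q+\beta;X)=\sum_{n\le X}d(n)e(an/q)e(n\beta)$, expanding $d=1\ast 1$, sorting $n=bc$ by the residue of $b$ mod $q$, and applying partial summation in the $e(n\beta)$ factor; the main term is $\frac{1}{q}\sum_{b,c}(\cdots)$ which reproduces $\asymp (X/q)\log X$ when $|\beta|\le c/(q\sqrt X)$, and the error terms involving incomplete Gauss-type sums or the fractional parts are $O(\sqrt X\,X^\epsilon)$ or better by the classical divisor-sum-in-arithmetic-progressions estimates (Estermann, or just the hyperbola method with $q\le X^{1/2-\delta}$), hence negligible against the main term. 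Once this is in hand, the proposition follows by integrating the pointwise lower bound over the disjoint union of the short intervals and summing $\sum_{q\le cX^{1/2-\delta}}\varphi(q)/(q\sqrt X)\gg X^{-\delta}$; taking $\delta\to 0$ appropriately, or absorbing the $X^{-\delta}$ into the statement, yields $\int_0^1|S_d(\alpha;X)|^s\,d\alpha\gg X^{s/2}$ for all $s<1$ (indeed all $s>0$), completing the proof. Alternatively, if one prefers to avoid reproving a major-arc estimate, the same conclusion follows by the reverse-H\"older inequality $\|S_d\|_1\le \|S_d\|_s^{\theta}\|S_d\|_2^{1-\theta}$ with $\tfrac1\theta s+\tfrac{1-\theta}{1}\cdot 2$... — but this reintroduces the logarithm, so the direct major-arc computation is the approach I would commit to.
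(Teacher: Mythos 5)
Your plan is in the right spirit (major arcs around rationals with denominator up to $\sqrt X$, main term from the divisor sum at $a/q$, stability on a short arc), but as written it cannot deliver the stated bound, for two intertwined reasons. First, the bookkeeping: with arcs of length $\asymp 1/(qQ_0)$ around denominators $q\le Q_0$, the total contribution you can hope for is of order $X^{(s-1)/2}Q_0\,(\log(X/Q_0^2))^s$, so you only recover the full $X^{s/2}$ if $Q_0$ is a fixed positive proportion of $\sqrt X$; cutting at $Q_0=cX^{1/2-\delta}$, as your error analysis forces you to do, leaves you with $O(X^{s/2-\delta+\varepsilon})$, and ``taking $\delta\to 0$'' is not legitimate for a fixed statement since your main-term/error comparison degenerates as $\delta\to 0$. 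Second, and this is the real gap, in the range $q\asymp\sqrt X$ that carries essentially all of the arc measure, the pointwise stability estimate you identify as the main obstacle is not available: the main term there is $(X/q)\log(X/q^2)\asymp\sqrt X$ (note the correct factor is $\log(X/q^2)$, not $\log X$, and it is only a bounded constant in this range), while every classical pointwise error bound for $\sum_{n\le x}d(n)e(an/q)$ (Estermann/hyperbola/Voronoi with sharp cutoff) is of size at least $X^{1/2+o(1)}$ when $q\asymp\sqrt X$, i.e.\ comparable to or larger than the main term; worse, partial summation against the phase $e(n\beta)$ inflates the error by a factor $1+|\beta|X\asymp\sqrt X/q$ over the arc $|\beta|\le c/(q\sqrt X)$. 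So for individual $a$ the main term simply does not dominate, and no elementary pointwise input will make it do so.

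The paper's proof closes exactly this gap by refusing to argue pointwise in $a$: it smooths the sum, applies Voronoi summation at $a/q$ with $q\sim c\sqrt X$ (so the dual sum has effective length $\asymp q^2/X\asymp c^2$ with rapidly decaying Bessel weights), keeps only the single dyadic block $q\sim c\sqrt X$ with arcs $|\beta|\le c/X$ (total measure $\asymp c^3$, already a constant), and bounds the Voronoi error term in mean square over the numerators $a\bmod q$ via H\"older and the large sieve. After this averaging over $a$, the error is $O(c^{s})$ relative to the main term $\asymp(X/q)^s\asymp(\sqrt X/c)^s$ per residue class, so choosing $c$ small wins. That averaging over $a$ (together with the smoothing that makes the dual sum short) is the missing idea in your proposal; without it, your argument either stalls at $q\le X^{1/2-\delta}$ and loses a power of $X$, or asserts a pointwise major-arc estimate at $q\asymp\sqrt X$ that is not true with the claimed error terms. (Your opening reverse-H\"older route is indeed a dead end for the reason you give.)
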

\begin{proof}
    The proof of this follows from Voronoi summation along with the large sieve to deal with the error terms introduced.

    Let $w$ be some smooth function satisfying $\charf{[X^{-\frac{1}{10}}, 1 - X^{-\frac{1}{10}}]}\le w\le\charf{[1/X, 1]}$
    with $w^{(j)}(x) \ll_j X^{\frac{1}{10}j}$ for all $j$. We may then smooth $S_d(\alpha; X)$ by replacing it with
    \[\tilde S_d(\alpha; X) = \sum_{n} d(n)e(n\alpha)w\pfrc{n}{X}\]
    since by Parseval and Cauchy-Schwarz
    \begin{multline*}
        \bigg|\int_0^1 |S_d(\alpha; X)|^s - |\tilde S_d(\alpha; X)|^sd\alpha\bigg|\\
        \le\int_0^1 \bigg|\sum_{n\in [1, X^{\frac{9}{10}}]\cup [X - X^{\frac{9}{10}}, X]}d(n)e(n\alpha)\bigg(1 - w\pfrc{n}{X}\bigg)\bigg|^sd\alpha\\
        \le \bigg(\sum_{n\in [1, X^{\frac{9}{10}}]\cup [X - X^{\frac{9}{10}}, X]} d(n)^2\bigg)^{\frac{s}{2}}\ll X^{\frac{9s}{20} + \eps},
    \end{multline*}
    which is an acceptable error. It thus remains to show that 
    \[\int_0^1 |\tilde S_d(\alpha; X)|^sd\alpha\gg X^{\frac{s}{2}}.\]

    Take $c$ to be some sufficiently small constant. Then, we have that
    \[
        \int_0^1|S_d(\alpha; X)|^sd\alpha
        \ge\sum_{q\sim c\sqrt{X}}\sumCp_{a(q)}
        \int_{-\frac{c}{X}}^{\frac{c}{X}}\bigg|\tilde S_d\bigg(\frac{a}{q} + \beta; X\bigg)\bigg|^sd\beta.
    \]
    Now, note that by Proposition \ref{prop:voronoi_summation} (Voronoi summation), we have that
    \begin{align*}
        S_d\left(\frac{a}{q} + \beta; X\right) = \frac{1}{q}&\int (\log(x/q^2) + 2\gamma - 1)e(x\beta)w\pfrc{x}{X}dx \\
        &+ \frac{1}{q}\sum_{n}d(n)e\left(-\frac{\conj an}{q}\right)\mc I_dw_\beta\pfrc{n}{q^2/X}.\numberthis
        \label{eq:init_voronoi}
    \end{align*}
    where 
    \[
        w_\beta(x) = w(x)e(xX\beta).
    \]
    If $c$ is sufficiently small, then we have the bound $\Re(e(x\beta))\ge 1 - c$ for $0\le x\le X, |\beta|\le\frac{c}{X}$. 
    Therefore, for $q\sim c\sqrt{X}$, we have that
    \begin{align*}
        \bigg|\int &(\log(x/q^2) + 2\gamma - 1)e(x\beta)w\pfrc{x}{X}dx\bigg|\\
        &\ge (1 - c)\int_{q^2}^{X - X^{\frac{9}{10}}} (\log(x/q^2) + 2\gamma - 1)dx - \int_1^{q^2} (\log(q^2/x) + 2\gamma - 1) dx\\
        &\hspace{1cm}\ge (1 + O(c))X.
    \end{align*}
    We remark that for $|\beta|\le\frac{c}{X}$ (which is so in our case), $w_\beta$ satisfies the bounds 
    \[w_\beta^{(j)}(x)\ll X^{\frac{1}{10}j}\]
    and also for $1/X\le x\le 1$
    \begin{equation}
        w_\beta'(x)\ll
        \begin{cases} 
            X^{\frac{1}{10}} & \frac{1}{X}\le x\le X^{-\frac{1}{10}}\\ 
            c & X^{-\frac{1}{10}}\le x\le 1 - X^{-\frac{1}{10}} \\
            X^{\frac{1}{10}} & 1 - X^{-\frac{1}{10}}\le x\le 1.
        \end{cases}
        \label{eq:w_deriv}
    \end{equation}
    Note that the bounds on $\mc I_dw_\beta$ given by Proposition \ref{prop:voronoi_summation}
    imply that the contribution of terms $n\ge X^{\frac{1}{9}}$ is $\ll_A X^{-A}$. 
    It follows from (\ref{eq:init_voronoi}) that
    \[\bigg|\tilde S_d\left(\frac{a}{q} + \beta; X\right)\bigg|^s\ge (1 + O(c))\frac{X^s}{q^s} 
    - \frac{1}{q^s}E(q, a; \beta)^s + O(X^{-2020})\]
    where 
    \[E(q, a; \beta) = 
    \bigg|\sum_{n\le X^{\frac{1}{9}}}d(n)e\left(-\frac{\conj an}{q}\right)\mc I_dw_\beta\pfrc{n}{q^2/X}\bigg|.\]
    By H\"older and the large sieve, we have that 
    \begin{align*}
        \sumCp_{a(q)} E(q, a; \beta)^s&\le\varphi(q)^{1 - \frac{s}{2}}\bigg(\sumCp_{a(q)} E(q, a;\beta)^2\bigg)^{\frac{s}{2}}\\
        &\ll \varphi(q)
        \bigg(\sum_{n\le X^{\frac{1}{9}}}d(n)^2\bigg|\mc I_dw_\beta\pfrc{n}{q^2/X}\bigg|^2\bigg)^{\frac{s}{2}}.
    \end{align*}
    Now, integrating by parts once yields that
    \begin{align*}
        \mc I_dw_\beta\pfrc{n}{q^2/X} &= \int w_\beta\pfrc{x}{X} B_0\pfrc{4\pi\sqrt{nx}}{q}dx \\
        &=\frac{q}{2\pi\sqrt{n}}\int \frac{1}{X}w_\beta'\pfrc{x}{X} B_1\pfrc{4\pi\sqrt{nx}}{q}dx.
    \end{align*}
    We have the standard bounds (see section 8.451 in \cite{GRZ}, for example)
    \begin{align*}
        B_1(x)\ll x^{-\frac{1}{2}} && (x\gg 1),\\
        B_1(x)\ll x^{-1} && (x\ll 1),
    \end{align*}
    where we write $B_\nu = 4(-1)^{\nu}K_\nu - 2\pi Y_\nu$, so it follows that
    \begin{multline*}
        \int w_\beta'\pfrc{x}{X} B_1\pfrc{4\pi\sqrt{nx}}{q}dx
        \ll\frac{q}{\sqrt{n}}\int_1^{\frac{q^2}{n}} \frac{1}{\sqrt{x}}\bigg|\frac{1}{X}w_\beta'\pfrc{x}{X}\bigg| dx \\
        + \frac{q}{n^{\frac{1}{4}}}\int_{q^2/n}^{X - X^{\frac{9}{10}}} \frac{1}{x^{\frac{1}{4}}}\bigg|\frac{1}{X}w_\beta'\pfrc{x}{X}\bigg| dx
        + \frac{q}{n^{\frac{1}{4}}}\int_{X - X^{\frac{9}{10}}}^X \frac{1}{x^{\frac{1}{4}}}\bigg|\frac{1}{X}w_\beta'\pfrc{x}{X}\bigg| dx
        \ll\frac{q}{n^{\frac{1}{4}}}.
    \end{multline*}
    It follows that we have the bound
    \[\int w_\beta\pfrc{x}{X} B_0\pfrc{4\pi\sqrt{nx}}{q}dx\ll\frac{q^2}{n^{\frac{3}{4}}}.\]
    Therefore, it follows that
    \[\sumCp_{a(q)} E(q, a; \beta)^s\ll\varphi(q)\bigg(\frac{q}{\varphi(q)}\bigg)^{\frac{s}{2}}
    \bigg(q^2\sum_{n\le X^{\frac{1}{9}}} d(n)^2n^{-\frac{3}{4}}\bigg)^{\frac{s}{2}}\ll \varphi(q)q^{2s}\cdot\frac{q}{\varphi(q)}.\]
    It follows that so long as $c$ is sufficiently small, for $q\sim c\sqrt{X}$
    \begin{align*}
        \sumCp_{a(q)} \bigg|\tilde S_d\left(\frac{a}{q} + \beta; X\right)\bigg|^s
        &\ge (1 + O(c))\varphi(q)\frac{X^s}{q^s} + O\bigg(\varphi(q)q^s\frac{q}{\varphi(q)}\bigg)\\ 
        &\ge \varphi(q)X^{\frac{s}{2}}\cdot
        \bigg(\frac{1}{2}(2c)^{-s} + O\bigg(c^s\frac{q}{\varphi(q)}\bigg)\bigg).
    \end{align*}
    We thus obtain that
    \[\int_0^1 \bigg|\tilde S_d\left(\frac{a}{q} + \beta; X\right)\bigg|^s d\alpha\ge X^{\frac{s}{2}}
    \int_{-\frac{c}{X}}^{\frac{c}{X}}\sum_{q\sim c\sqrt{X}}\varphi(q)\frac{1}{2}(2c)^{-s} + O(c^s q) d\beta + O(X^{-2020}).\]
    Since 
    \[\sum_{q\sim c\sqrt{X}}\varphi(q) = (1 + o(1))\frac{9}{\pi^2}c^2X,\]
    we obtain 
    \begin{align*}\int_{-\frac{c}{X}}^{\frac{c}{X}}\sum_{q\sim c\sqrt{X}}\varphi(q)\frac{1}{2}(2c)^{-s} + O(c^s q) d\beta
        \ge (1 + o(1))2c\frac{9}{\pi^2}c^2\bigg(\frac{1}{2}(2c)^{-s} + O(c^s)\bigg).
    \end{align*}
    This is $\gg_{c, s} 1$ for $c$ sufficiently small, so the desired result follows.
\end{proof}

\section{Proof of Proposition \ref{prop:app_fe}}
\label{sec:app_fe_pf}
Instead of showing Proposition \ref{prop:app_fe}, we show
Proposition \ref{prop:pre_app_fe}, from 
which Proposition \ref{prop:app_fe} clearly follows.
\addtocounter{equation}{1}
\begin{proposition} 
    \label{prop:pre_app_fe}

    Let $\delta = \frac{1}{100}$.
    Suppose that $X'\asymp X$, and that $X_1\asymp X^{2\delta}$. 
    Also, suppose that $\kappa > 0$ is sufficiently small. 
    Then, for $0 < s < 2$
    \begin{multline*}
        \int_0^1\bigg|\frac{1}{\sqrt{X'}}\sum_{n\le X'}\lambda_\star(n)e(n\alpha)\bigg|^sd\alpha \\ 
        = \frac 23\int_1^2t\int_0^1\bigg|\frac{1}{t\sqrt{X_1}}\sum_{n}\lambda_\star(n)e(n\alpha){\mc I}_\star w\pfrc{n}{X_1t^2}\bigg|^s d\alpha dt + O(X^{-\kappa s(2 - s)}),
        \numberthis
        \label{eq:moment_pre_low}
    \end{multline*}
    and for $s\ge 2$, we have that 
    \begin{multline*}
        \int_0^1\bigg|\frac{1}{\sqrt{X'}}\sum_{n\le X'}\lambda_f(n)e(n\alpha)\bigg|^sd\alpha \\ 
        = \frac 23\int_1^2t\int_0^1\bigg|\frac{1}{t\sqrt{X_1}}\sum_{n}\lambda_f(n)e(n\alpha){\mc I}_f w\pfrc{n}{X_1t^2}\bigg|^s d\alpha dt + O(X^{-\kappa}).
        \numberthis
        \label{eq:moment_pre_high}
    \end{multline*}
\end{proposition}

We now proceed to prove Proposition \ref{prop:pre_app_fe} in the remainder of this section. After some initial setup, we 
shall show (\ref{eq:moment_pre_low}), (\ref{eq:moment_pre_high}) separately. 
The proofs of the two are largely similar, with (\ref{eq:moment_pre_high}) being
slightly simpler.

Let $ \delta_1 = \delta^{100}$.
Let $w$ be some smooth function satisfying $\charf{[X^{-\delta_1}, 1 - X^{-\delta_1}]}\le w\le\charf{[C/X, 1]}$ for some 
$C > 0$ with $w^{(j)}(x) \ll_j X^{\delta_1 j}$ for all $j$. Then, let
\[
    I_\star^s = \int_0^1 \bigg|\frac{1}{\sqrt{X'}}\sum_{n} \lambda_\star(n)e(n\alpha)w\pfrc{n}{X'}\bigg|^sd\alpha.
\]
We shall use the following lemma to show that working with this smoothed exponential sum results in an acceptable loss.
\begin{lemma}
    We have that for $0 < s < 2$
    \begin{equation}
        I_\star^s-\int_0^1\bigg|\frac{1}{\sqrt{X'}}\sum_{n\le X'}\lambda_\star(n)e(n\alpha)\bigg|^sd\alpha\ll X^{-\frac{\delta_1}{4}s + \eps}.
        \label{eq:smooth_error_d}
    \end{equation}
    Furthermore, for $s\ge 2$
    \begin{equation}
        I_f^s-\int_0^1\bigg|\frac{1}{\sqrt{X'}}\sum_{n\le X'}\lambda_f(n)e(n\alpha)\bigg|^sd\alpha\ll X^{-\delta_1 + \eps}.
        \label{eq:smooth_error_f}
    \end{equation}
\end{lemma}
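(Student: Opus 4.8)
The plan is to show the smoothed and sharp‑cutoff integrands are $L^1$‑close after taking $s$‑th powers, using only Parseval together with elementary comparison inequalities for $|a|^s-|b|^s$; the arithmetic inputs are the classical second moment estimates $\sum_{n\le Y}d(n)^2\ll Y(\log Y)^3$, the bound \eqref{eq:rankin_selberg}, and (for $s\ge 2$) Jutila's bound. Write $A(\alpha) = \frac{1}{\sqrt{X'}}\sum_{n\le X'}\lambda_\star(n)e(n\alpha)$ and $\tilde A(\alpha) = \frac{1}{\sqrt{X'}}\sum_n \lambda_\star(n)e(n\alpha)w(n/X')$, so $I_\star^s=\int_0^1|\tilde A|^s\,d\alpha$. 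Since $0\le w\le 1$, $w(n/X')=1$ on $[X'X^{-\delta_1},X'(1-X^{-\delta_1})]$, and $w(n/X')=0$ for $n>X'$, the cutoffs $\charf{n\le X'}$ and $w(n/X')$ differ only for $n\in E$, where $E\subset[1,2X^{1-\delta_1}]\cup[X'(1-X^{-\delta_1}),X']$ has $|E|\ll X^{1-\delta_1}$. Hence
\[
    D(\alpha) := \tilde A(\alpha)-A(\alpha) = \frac{1}{\sqrt{X'}}\sum_{n\in E}c_n\lambda_\star(n)e(n\alpha),\qquad |c_n|\le 1,
\]
and by Parseval and the second moment estimates applied to the (at most two) short intervals comprising $E$,
\[
    \int_0^1|D(\alpha)|^2\,d\alpha = \frac{1}{X'}\sum_{n\in E}|c_n\lambda_\star(n)|^2 \ll X^{-\delta_1+\eps}.
\]

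For $0<s\le 1$, subadditivity of $t\mapsto t^s$ gives $\big||\tilde A|^s-|A|^s\big|\le|D|^s$, so by Jensen
\[
    \bigg|I_\star^s-\int_0^1|A(\alpha)|^s\,d\alpha\bigg|\le\int_0^1|D(\alpha)|^s\,d\alpha\le\bigg(\int_0^1|D(\alpha)|^2\,d\alpha\bigg)^{s/2}\ll X^{-\frac{\delta_1}{2}s+\eps},
\]
which is stronger than \eqref{eq:smooth_error_d} requires. For $1<s<2$ I would instead use the mean value inequality $\big||a|^s-|b|^s\big|\le s\big(|a|^{s-1}+|b|^{s-1}\big)|a-b|$ with $a=\tilde A(\alpha)$, $b=A(\alpha)$, then H\"older with exponents $\tfrac{s}{s-1},s$:
\[
    \bigg|I_\star^s-\int_0^1|A|^s\bigg|\ll\bigg(\Big(\int_0^1|A|^s\Big)^{\frac{s-1}{s}}+\Big(\int_0^1|\tilde A|^s\Big)^{\frac{s-1}{s}}\bigg)\bigg(\int_0^1|D|^s\bigg)^{\frac 1s}.
\]
Here $\int_0^1|A|^2=\frac{1}{X'}\sum_{n\le X'}\lambda_\star(n)^2\ll X^\eps$ and, since $0\le w\le 1$, $\int_0^1|\tilde A|^2=\frac{1}{X'}\sum_n\lambda_\star(n)^2w(n/X')^2\ll X^\eps$, so Jensen gives $\int_0^1|A|^s,\int_0^1|\tilde A|^s\ll X^\eps$; and $\big(\int_0^1|D|^s\big)^{1/s}\le\big(\int_0^1|D|^2\big)^{1/2}\ll X^{-\delta_1/2+\eps}$. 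Combining, the left side is $\ll X^{-\delta_1/2+\eps}\le X^{-\frac{\delta_1}{4}s+\eps}$ for $s<2$, which yields \eqref{eq:smooth_error_d}.

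For $s\ge 2$ and $\star=f$ I would again use the mean value inequality, now exploiting $\max(|A(\alpha)|,|\tilde A(\alpha)|)^{s-1}\ll_s 1$ pointwise. For $A$ this is Jutila's bound $S_f(\alpha;X')\ll\sqrt{X'}$ (Proposition \ref{prop:jut_bound}); for $\tilde A$, writing $w(n/X')=-\int_{n/X'}^\infty w'(u)\,du$ and interchanging the sum and integral,
\[
    \tilde A(\alpha)=-\frac{1}{\sqrt{X'}}\int_0^\infty w'(u)\,S_f(\alpha;uX')\,du,
\]
and since $w'$ is supported on $u\ll 1$ with $\int_0^\infty|w'(u)|\,du\ll 1$ (the ramp regions have length $\ll X^{-\delta_1}$ and $w'\ll X^{\delta_1}$ there) and $|S_f(\alpha;uX')|\ll\sqrt{uX'}\ll\sqrt{X'}$ on that support, we get $|\tilde A(\alpha)|\ll 1$. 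Hence
\[
    \bigg|I_f^s-\int_0^1|A|^s\bigg|\ll_s\int_0^1|D(\alpha)|\,d\alpha\le\bigg(\int_0^1|D(\alpha)|^2\,d\alpha\bigg)^{1/2}\ll X^{-\delta_1/2+\eps},
\]
which gives \eqref{eq:smooth_error_f} (in fact with a slightly better exponent than stated). The argument is essentially bookkeeping: the only points of care are that the naive bound $\big||a|^s-|b|^s\big|\le|a-b|^s$ holds only for $s\le 1$, so for $s\ge 1$ one must run the mean-value-plus-H\"older argument, and that for $s\ge 2$ a uniform pointwise bound for the \emph{smoothed} sum is needed, which the display above reduces to Jutila's bound by partial summation; all the arithmetic inputs are classical and already invoked elsewhere in the paper.
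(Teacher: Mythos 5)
Your argument is essentially the paper's own proof: isolate the edge ranges where the sharp cutoff and $w$ differ, bound the $L^2$ norm of the difference $D$ by Plancherel, and then use $\bigl||a|^s-|b|^s\bigr|\le|a-b|^s$ for $s\le 1$, the mean-value inequality plus H\"older for $1<s<2$, and the mean-value inequality plus the pointwise bound $|A|,|\tilde A|\ll 1$ (Jutila and partial summation) for $s\ge 2$. One small correction: for $s\ge 2$ your final bound $X^{-\delta_1/2+\eps}$ is \emph{weaker}, not stronger, than the stated $X^{-\delta_1+\eps}$ (since $-\delta_1/2>-\delta_1$); this matches what the paper's own proof actually delivers and is harmless for the application, where only some positive power saving is needed, but the parenthetical claim of a better exponent should be dropped.
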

\begin{proof}
    For $s\le 1$, we have the inequality $||x|^s - |y|^s|\le |x - y|^s$ for any  $x, y\in\CC$ so it follows by Cauchy-Schwarz 
    and the bound $|\lambda_\star(n)|\le d(n)$ that
    \begin{align*}
        I_\star^s - \int_0^1& \bigg|\frac{1}{\sqrt{X'}}\sum_{n\le X'} \lambda_\star(n)e(n\alpha)\bigg|^sd\alpha\\ 
        &\ll\int_0^1\bigg|\frac{1}{\sqrt{X'}}
        \sum_{n\in [0, X'X^{-\delta_1}]\cup [X' - X'X^{-\delta_1}, X']}\lambda_\star(n)e(n\alpha)\bigg(1 - w\pfrc{n}{X'}\bigg) 
        \bigg|^sd\alpha\\
        &\hspace{2cm}\ll X^{-\frac{\delta_1}{2}s + \eps}.
        \numberthis
        \label{eq:smooth_error_low}
    \end{align*}
    Now, for $1 <  s\le 2$, by H\"older and Plancherel, we have
    \[I_\star^{s - 1}, \int_0^1\bigg|\frac{1}{\sqrt{X'}}\sum_{n\le X'} \lambda_\star(n)e(n\alpha)\bigg|^{s - 1}d\alpha\ll X^\eps.\]
    For $1 < s$, we also have that $||x|^s - |y|^s|\ll (|x|^{s - 1} + |y|^{s - 1})|x - y|$, so for $1 < s < 2$, we have
    \begin{align*}
        I_\star^s& - \int_0^1 \bigg|\frac{1}{\sqrt{X'}}\sum_{n\le X'} \lambda_\star(n)e(n\alpha)\bigg|^sd\alpha\\
        &\ll X^{\eps}\int_0^1\bigg|\frac{1}{\sqrt{X'}}
        \sum_{n\in [0, X'X^{-\delta_1}]\cup [X' - X'X^{-\delta_1}, X']}\lambda_\star(n)e(n\alpha)\bigg(1 - w\pfrc{n}{X'}\bigg) 
        \bigg|d\alpha\\
        &\ll X^{-\frac{\delta_1}{2} + \eps}.\numberthis
        \label{eq:smooth_error_high}
    \end{align*}
    Combining (\ref{eq:smooth_error_low}), (\ref{eq:smooth_error_high}), we obtain (\ref{eq:smooth_error_d}).

    To show (\ref{eq:smooth_error_f}), note that we have that by Proposition \ref{prop:jut_bound} and partial summation
    \[\frac{1}{\sqrt{X'}}\sum_{n}\lambda_f(n)e(n\alpha)w\pfrc{n}{X'}, \frac{1}{\sqrt{X'}}\sum_{n\le X'}\lambda_f(n)e(n\alpha)\ll 1.\]
    Then, for $s\ge 2$ we obtain from Cauchy-Schwarz and Plancherel that
    \begin{align*}
        I_f^s &- \int_0^1 \bigg|\frac{1}{\sqrt{X'}}\sum_{n\le X'} \lambda_\star(n)e(n\alpha)\bigg|^sd\alpha\\
        &\ll \frac{1}{\sqrt{X'}}\int_{0}^1\bigg|\sum_{n\in [0, X'X^{-\delta_1}]\cup [X' - X'X^{-\delta_1}, X']}\lambda_\star(n)e(n\alpha)\bigg(1 - w\pfrc{n}{X'}\bigg)\bigg|^sd\alpha\\
        &\ll X^{-\frac{\delta_1}{2} + \eps}
    \end{align*}
    so (\ref{eq:smooth_error_f}) follows from Cauchy-Schwarz and Plancherel.
\end{proof}
\subsection{Proof of (\ref{eq:moment_pre_low})}

At this point we require the following estimate.
\begin{lemma}
    Suppose that 
    \[
        Q\asymp X^{\frac{1}{2} + \delta}, L = \sum_{q\sim Q}\varphi(q).
    \]
    Then, we have that for $0 < s < 2$
    \begin{equation}
        I_\star^s = \frac{1}{L}\sum_{q\sim Q}\sumCp_{a(q)} \bigg|\sum_n \lambda_\star(n)e\pfrc{an}{q}w\pfrc{n}{X}\bigg|^s 
        + O(X^{-\frac{\delta}{4}s(2 - s) + \eps}).
        \label{eq:jut_nophase}
    \end{equation}
    \label{lem:jut_nophase}
\end{lemma}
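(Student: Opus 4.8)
The plan is to use Proposition~\ref{prop:jut_circle_method} to replace the integral $I_\star^s$ by the averaged sum over Farey fractions. Concretely, I would take $\mc Q = \{q\sim Q\}$ with $Q\asymp X^{1/2+\delta}$, and choose the spacing parameter $\Delta = H/Q^2$ with $H$ a small power of $X$ --- say $H = X^{\delta/2}$ or so --- so that $\Delta\asymp X^{-1-3\delta/2}$ is small enough that the bumps $\phi(\Delta^{-1}(\alpha - a/q))$ centered at distinct Farey points $a/q$, $q\sim Q$, essentially do not overlap. Write $T_\star(\alpha) = X'^{-1/2}\sum_n \lambda_\star(n)e(n\alpha)w(n/X')$ for the smoothed sum, so that $I_\star^s = \int_0^1 |T_\star(\alpha)|^s\,d\alpha$. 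Using $\tilde\chi$ as defined in that proposition (which has mean $1$), the strategy is to compare $\int_0^1 |T_\star(\alpha)|^s\,d\alpha$ with $\int_0^1 \tilde\chi(\alpha)|T_\star(\alpha)|^s\,d\alpha$; the latter, after unfolding $\tilde\chi$, is exactly a weighted version of the right-hand side of \eqref{eq:jut_nophase}, and a further step removes the weight $\phi(\Delta^{-1}(\alpha - a/q))$ at the cost of showing $|T_\star|^s$ is roughly constant on each bump.

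The key steps in order: First, bound $\int_0^1 |1-\tilde\chi(\alpha)|\,|T_\star(\alpha)|^s\,d\alpha$. For $s\ge 1$ split the exponent $s = (s/2)\cdot 2 \cdot (1 - \text{something})$ --- more precisely apply H\"older with exponents $(2/s, 2/(2-s))$ to get $\int |1-\tilde\chi|\,|T_\star|^s \le \big(\int |1-\tilde\chi|^{2/(2-s)}\big)^{(2-s)/2}\big(\int |T_\star|^2\big)^{s/2}$; since $s<2$ we have $2/(2-s) \ge 1$... actually the cleaner route, valid for all $0<s<2$, is H\"older with the pair $(2/(2-s), 2/s)$ applied to the product $|1-\tilde\chi|\cdot|T_\star|^s$, giving a bound $\|1-\tilde\chi\|_2^{2-s}\,\|T_\star\|_2^{s}$ after noting $|1-\tilde\chi|\le |1-\tilde\chi|^{2-s}$ only where $|1-\tilde\chi|\le 1$, so one should rather split the range of $\alpha$ into where $\tilde\chi$ is close to $1$ and where it is not, or simply use that $\int|1-\tilde\chi|^2$ is small and $\int|T_\star|^q\ll X^\eps$ for $q\le 2$ by Plancherel and Rankin--Selberg/Deligne. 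In any case the Cauchy--Schwarz / H\"older combination with Proposition~\ref{prop:jut_circle_method} gives $\int_0^1 |1-\tilde\chi|\,|T_\star|^s\,d\alpha \ll \big(Q^4/(HL^2) + Q^{2+\eps}/L^2\big)^{(2-s)/2} X^\eps$; since $L\asymp Q^2$ this is $\ll (Q^{-2}H^{-1} + Q^{-2+\eps})^{(2-s)/2}X^\eps \ll X^{-\delta(2-s)+\eps}$ or thereabouts after optimizing $H$, which is acceptable. Second, unfold $\tilde\chi$ in $\int_0^1 \tilde\chi(\alpha)|T_\star(\alpha)|^s\,d\alpha = \frac{1}{\hat\phi(0)\Delta L}\sum_{q\sim Q}\sumCp_{a(q)}\int_0^1 \phi(\Delta^{-1}(\alpha - a/q))|T_\star(\alpha)|^s\,d\alpha$. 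Third, on each interval $|\alpha - a/q|\ll \Delta$ show that $T_\star(\alpha) = T_\star(a/q) + O(\Delta X^{1/2}\cdot X'^{-1/2}\cdot(\text{something}))$; since $\frac{d}{d\alpha}\sum_n \lambda_\star(n)e(n\alpha)w(n/X') \ll X' \cdot X'^{1/2}$ trivially, we get $|T_\star(\alpha) - T_\star(a/q)| \ll \Delta X'$, which is $\ll X^{-3\delta/2+\delta_1}$, small; then $||T_\star(\alpha)|^s - |T_\star(a/q)|^s|$ is controlled by this (using $||x|^s-|y|^s|\le|x-y|^s$ for $s\le 1$, and the Lipschitz-type bound times $|T_\star|^{s-1}$ with an $L^{s-1}\ll X^\eps$ average for $1<s<2$). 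After this replacement, $\int_0^1 \phi(\Delta^{-1}(\alpha-a/q))\,d\alpha = \Delta\hat\phi(0)$ pulls out cleanly, yielding $\frac{1}{L}\sum_{q\sim Q}\sumCp_{a(q)}|T_\star(a/q)|^s$ up to the accumulated errors. Reinstating $T_\star(a/q) = X'^{-1/2}\sum_n\lambda_\star(n)e(an/q)w(n/X')$ and replacing $X'$ by $X$ in the normalization and the weight (they are $\asymp$, and the discrepancy is another small power saving handled as in the smoothing lemma) gives exactly \eqref{eq:jut_nophase}.

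I expect the main obstacle to be bookkeeping the exponents so that every error term is genuinely of the shape $O(X^{-c\,s(2-s)+\eps})$ with $c \ge \delta/4$ uniformly for $0<s<2$, rather than merely $O(X^{-c+\eps})$ --- the factor $s(2-s)$ is exactly what comes out of raising the $L^2$-type discrepancy bound from Proposition~\ref{prop:jut_circle_method} to the power $(2-s)/2$ and pairing with an $s/2$ power of an $L^2$ norm, but one has to be careful that the $s\to 0$ and $s\to 2$ degenerations are both captured, and that the H\"older split does not secretly require $s\ge 1$. A secondary technical point is the $s\le 1$ versus $1<s<2$ dichotomy in step three (Lipschitz estimate for $|x|^s$), which forces two slightly different arguments exactly as in the preceding lemma; and one must make sure the non-overlap of the bumps is clean enough (or else handle the $O(1)$-fold overlap) so that unfolding $\tilde\chi$ and pulling $|T_\star|^s$ out does not double-count. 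Everything else --- the Plancherel bound $\int_0^1|T_\star|^2 \ll X^\eps$, the bound $\int_0^1|T_\star|^{s-1}\ll X^\eps$ for $1<s<2$, and the trivial derivative bound on the exponential sum --- is routine given \eqref{eq:rankin_selberg}, H\"older, and $|\lambda_\star(n)|\le d(n)$.
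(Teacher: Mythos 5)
Your overall route is the same as the paper's: compare $I_\star^s$ with $\int_0^1\tilde\chi(\alpha)|T_\star(\alpha)|^s\,d\alpha$ using Proposition~\ref{prop:jut_circle_method}, unfold $\tilde\chi$, and then remove the weight on each arc. Your first step is fine modulo the muddle: combining H\"older, Plancherel, the $L^2$ bound of Proposition~\ref{prop:jut_circle_method}, and the pointwise bound $\tilde\chi\ll X^{\eps}$ (needed once the H\"older exponent on $1-\tilde\chi$ exceeds $2$, i.e.\ for $s>1$) is exactly how the paper produces the $X^{-c\,s(2-s)+\eps}$ term, and your version of the exponent bookkeeping is if anything stronger than required. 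The genuine gap is in your third step. You replace $T_\star(\alpha)$ by $T_\star(a/q)$ on $|\alpha-a/q|\ll\Delta$ via the claim that $\frac{d}{d\alpha}\sum_n\lambda_\star(n)e(n\alpha)w(n/X')\ll X'^{3/2}$ ``trivially''. The trivial bound is $\sum_{n\le X'}n|\lambda_\star(n)|\ll X'^{2+\eps}$, which only gives $|T_\star(\alpha)-T_\star(a/q)|\ll\Delta X'^{3/2+\eps}\approx X^{1/2-3\delta/2+\eps}$ --- far too large. A derivative bound of the strength you assert is equivalent to square-root cancellation in all partial sums $\sum_{n\le t}\lambda_\star(n)e(n\alpha)$ uniformly in $\alpha$: for $\star=f$ this is Jutila's theorem (Proposition~\ref{prop:jut_bound}, not a trivial estimate, and it is precisely what the paper reserves for the separate $s\ge2$ argument), but for $\star=d$, which this lemma must cover, no such pointwise bound exists (partial sums of $d(n)e(n\alpha)$ are of size $t^{1+o(1)}/q$ near rationals with small denominator, and even at the specific points $a/q+\beta$ with $q\sim Q$ a pointwise bound of the needed quality is not available without redoing a Voronoi analysis). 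So as written the argument fails for $\star=d$.

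The missing idea, which is how the paper proceeds: do not compare $T_\star(\alpha)$ and $T_\star(a/q)$ pointwise. By partial summation in $n$ one writes, for $\beta\asymp\Delta$,
\[
T_\star(a/q+\beta)=e(X'\beta)\,T_\star(a/q)+O\bigl(X\Delta\,E(q,a,\beta)\bigr),
\qquad
E(q,a,\beta)=\sup_{I\subset[1,X']}\Bigl|\frac{1}{\sqrt{X'}}\sum_{n\in I}\lambda_\star(n)e(an/q)\,w(n/X')\Bigr|,
\]
so the error involves maximal partial sums rather than a derivative, and the unimodular phase $e(X'\beta)$ is harmless after taking absolute values. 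One then bounds $E(q,a,\beta)$ not pointwise but on average over the Farey fractions, using Montgomery's maximal large sieve (the points $a/q$, $q\sim Q$, are $Q^{-2}$-spaced with $Q^{2}\gg X$, so the mean square of $E$ over the family is $\ll X^{\eps}$); combined with H\"older and the elementary inequality $(A+O(B))^s=A^s+O\bigl(B^s+A^{\,s-\min(1,s)}B^{\min(1,s)}\bigr)$ (your $s\le1$ versus $1<s<2$ dichotomy), the total loss from removing the phase is $\ll(X\Delta)^{s/2}X^{\eps}\ll X^{-\delta s/2+\eps}$, which is acceptable. Without this averaging via the maximal large sieve your step three does not go through for $\star=d$, and even for $\star=f$ it requires Jutila's bound rather than a trivial one. (Two minor slips: with $L\asymp Q^2$ the first term in Proposition~\ref{prop:jut_circle_method} is $H^{-1}$, not $Q^{-2}H^{-1}$; and there is no need to trade $X'$ for $X$ at the end --- the argument can be carried out with $X'$ throughout.)
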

\begin{proof}

    Let $\phi$ be some nonzero smooth function supported on $[1, 2]$ with $\int\phi = 1$, and let
    \[
        H = X^\delta, \Delta = \frac{H}{Q^2},
    \]
    \[
        \tilde\chi(\alpha) = \frac{1}{\Delta L}\sum_{q\sim Q}\sumCp_{a(q)}\phi(\Delta^{-1}(\alpha - a/q)).
    \]

    Since the fractions  $\set{\frac{a}{q} : q\in\mc Q, (a, q) = 1}$ are at least $Q^{-2}$-spaced, we have the bound
    \[\tilde\chi(\alpha)\ll \frac{1}{L\Delta}\cdot (1 + \Delta Q^2)\ll Q^\eps\] 
    as $L\gg Q^{2}$.

    Now, let
    \[
        \tilde I_\star^s = 
        \int_0^1 \bigg|\frac{1}{\sqrt{X'}}\sum_{n}\lambda_\star(n)e(n\alpha)w\pfrc{n}{X'}\bigg|^s
        \tilde\chi(\alpha)d\alpha.
    \]
    By H\"older, we have that
    \[
        |I_\star^s - \tilde I_\star^s|
        \le\bigg(\int_0^1 \bigg|\frac{1}{\sqrt{X'}}S_\star(\alpha; X')\bigg|^2d\alpha\bigg)^{\frac{s}{2}}
        \bigg(\int_0^1 |1 - \tilde\chi(\alpha)|^{\frac{s}{2 - s}}d\alpha\bigg)^{1 - \frac{s}{2}}.
    \]
    Note that for $0 < s < 2$, we have that $\frac{s}{2 - s}\ge\frac{s}{2}$. By Plancherel and our pointwise bound on $\tilde\chi$, it
    follows that the above is
    \[\ll X^\eps\bigg(\int_0^1 |1 - \tilde\chi(\alpha)|^{\frac{s}{2}}d\alpha\bigg)^{1 - \frac{s}{2}}.\]
    By Proposition \ref{prop:jut_circle_method} and H\"older, we may therefore conclude that
    \begin{equation}
        |I_\star^s - \tilde I_\star^s|\ll X^{-\frac{\delta}{4}s(2 - s) + \eps}.
        \label{eq:jut_error_term}
    \end{equation}
    By the definition of $\tilde\chi$, we have that
    \[
        \tilde I_\star^s 
        = \int_1^2 \phi(t)\sum_{q\sim Q}\sumCp_{a(q)}\bigg|\frac{1}{\sqrt{X'}}\sum_{n} \lambda_\star(n)e\pfrc{an}{q}e(nt\Delta)
        w\pfrc{n}{X'}\bigg|^sdt.
    \]
    Now, note that for all $\beta\in [\Delta, 2\Delta]$, we have that
    \begin{multline*}
        \frac{1}{\sqrt{X'}}\sum_{n} \lambda_\star(n)e\pfrc{an}{q}e(n\beta)w\pfrc{n}{X'} \\
        = \frac{1}{\sqrt{X'}}e(X'\beta)\sum_n \lambda_\star(n)e\pfrc{an}{q}w\pfrc{n}{X'}
        + O(X\Delta E(q, a, \beta)),
    \end{multline*}
    where 
    \[
        E(q, a, \beta) = \sup_{I\subset [1, X']}\bigg|\frac{1}{\sqrt{X'}}\sum_{n\in I} \lambda_\star(n)e\pfrc{an}{q}w\pfrc{n}{X'}\bigg|.
    \]
    In general, for positive $A, B$, we have the inequality $(A + O(B))^s = A^s + O(B^s + A^{s - \min(1, s)}B^{\min(1, s)})$. It 
    follows that
    \[
        \bigg|\frac{1}{\sqrt{X'}}\sum_{n} \lambda_\star(n)e\pfrc{an}{q}e(n\beta)w\pfrc{n}{X'}\bigg|^s
        = \bigg|\frac{1}{\sqrt{X'}}\sum_n \lambda_\star(n)e\pfrc{an}{q}w\pfrc{n}{X'}\bigg|^s 
    \]
    plus an error that is 
    \[
        \ll(X\Delta)^sE(q, a, \beta)^s.
    \]
    if $s\le 1$ and 
    \begin{align*}
        \ll (X\Delta)^s&E(q, a, \beta)^s + (X\Delta)E(q, a, \beta)\bigg|\sum_n \lambda_\star(n)e\pfrc{an}{q}w\pfrc{n}{X'}\bigg|^{s - 1}\\
        &\ll (X\Delta)^{\frac{s}{2}}E(q, a, \beta)^s
    \end{align*}
    if $1 < s < 2$.
    Since $(X\Delta)\ll X^{-\delta}$, we have that both of these error terms are 
    \[
        \ll (X\Delta)^{\frac{s}{2}}\bigg(E(q, a, \beta)^s 
        + \bigg|\frac{1}{\sqrt{X'}}\sum_n \lambda_\star(n)e\pfrc{an}{q}w\pfrc{n}{X'}\bigg|^s\bigg).
    \]
    At this point, we shall use a maximal version of the large sieve 
    of Montgomery (see Theorem 2 of \cite{Mo}), which we 
    state below:
    \begin{proposition}[Maximal large sieve, \cite{Mo}]
        For any sequence $a : \NN\to\CC$ supported on an interval $I$ of length
        $N$, and $\eta$-separated frequencies $\alpha_1,\dots,\alpha_R$, we have that 
        \[
            \sum_{r\le R}\sup_{J\subset I}\bigg|\sum_{n\in J} a(n)e(n\alpha_r)\bigg|^2
            \ll(\eta^{-1} + N)\sum_n |a(n)|^2
        \]
        where $J$ ranges over subintervals of $I$
    \end{proposition}
    Then, by H\"older and the maximal large sieve, we have that 
    \begin{equation}
        \frac{1}{\varphi(q)}\sumCp_{a(q)} E(q, a, \beta)^s 
        + \bigg|\frac{1}{\sqrt X'}\sum_n \lambda_\star(n)e\pfrc{an}{q}w\pfrc{n}{X'}\bigg|^s\ll X^\eps.
        \label{eq:small_phase_err_bd}
    \end{equation}
    Combining the above with (\ref{eq:jut_error_term}), we obtain that 
    \begin{equation}
        I_\star^s = \sum_{q\sim Q}\sumCp_{a(q)}\bigg|\frac{1}{\sqrt{X'}}\sum_{n} \lambda_\star(n)e\pfrc{an}{q}w\pfrc{n}{X'}\bigg|^s 
        + O(X^{-\frac{\delta}{4}s(2 - s) + \eps}).
        \label{eq:jut_circle_method_nophase}
    \end{equation}
\end{proof}

Now, take $Q = (X'X_1)^{\frac{1}{2}}$, and suppose that $q\sim Q$.

By Proposition \ref{prop:voronoi_summation}, we have that
(noting that the main term in the case of $\star = d$ can be easily absorbed
into the error term)
\begin{multline*}
    \frac{1}{\sqrt{X'}}\sum_{n} \lambda_\star(n)e\pfrc{an}{q}w\pfrc{n}{X'}\\
    = \frac{1}{\sqrt{X_1(q/Q)^2}}\sum_n\lambda_\star(n)e\prn{-\frac{\conj an}{q}}\mc I_\star w\pfrc{n}{X_1(q/Q)^2} + O(X^{-\frac{\delta s}{2} +\eps}).
\end{multline*}
From the properties of $I_\star w$ noted in Proposition \ref{prop:voronoi_summation}, we have that 
$\mc I_\star w(x)$ is $\ll_A X^{-A}$ for $x\ge X^{2\delta_1}$. 
Furthermore, we have the trivial bound
$\mc I_\star w(x)\ll 1$, and $(\mc I_\star w)^{(j)}(x)\ll X^{\delta_1}$ for $j\ge 1$.

Thus, noting that $a\mapsto -\conj a$ is a permutation of $(\ZZ/q\ZZ)^\times$, combining the above with 
Lemma \ref{lem:jut_nophase} (observing that $Q^2/X' = X_1$)
\begin{multline*}
    I_\star^s = \frac{1}{L}\sum_{q\sim Q}\sumCp_{a(q)}
    \bigg|\frac{1}{\sqrt{X_1(q/Q)^2}}\sum_{n\le X_1X^{2\delta_1}}
    \lambda_\star(n)e\pfrc{an}{q}\mc I_\star w\pfrc{n}{X_1(q/Q)^2}\bigg|^s \\+ O(X^{-\frac{\delta}{4}s(2 - s) + \eps}).
\end{multline*}
We now deal with the inner sum with the following lemma, which amounts to the observation that 
averaging the exponential sum over fractions $\frac{a}{q}$ with $(a, q) = 1, 0 < a < q$ is essentially
integration over $[0, 1]$. This is because these fractions are equidistributed on scales much smaller than the reciprocal of the length of the exponential sum since $q$ is large.

This observation is stated more generally in the following lemma. 
\begin{lemma}
    Suppose that $a : \NN\to\CC$ is a sequence of complex numbers satisfying $|a(n)|\ll n^\eps$.  
    Then, if $q\ge Y^{10}$, $0 < s < 2$, we have that
    \[\frac{1}{\varphi(q)}\sumCp_{a(q)}\bigg|\frac{1}{\sqrt{Y}}\sum_{n\le Y} a(n)e\pfrc{an}{q}\bigg|^s 
    = \int_0^1\bigg|\frac{1}{\sqrt{Y}}\sum_{n\le Y} a(n)e(n\alpha)\bigg|^sd\alpha + O(Y^{-s}).\]
    \label{lem:coprime_riemann_sum}
\end{lemma}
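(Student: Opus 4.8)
The plan is to compare the discrete average over Farey fractions $a/q$ with the integral over $[0,1]$ by expanding the $s$-th power is not polynomial, so instead I would exploit the spacing of the points $\{a/q : (a,q)=1\}$ together with the smoothness of the function $\alpha\mapsto F(\alpha) := Y^{-1/2}\sum_{n\le Y} a(n)e(n\alpha)$. The key observation is that $F$ is a trigonometric polynomial of degree $\le Y$, so $|F|^s$ — while not smooth where $F$ vanishes — is Lipschitz-on-average in a quantitatively controllable way, and the points $a/q$, $(a,q)=1$, together with the companion points at $a/q$ for all $0\le a<q$ (i.e.\ including non-reduced fractions) form a grid fine enough, since $q\ge Y^{10}$, to resolve oscillations of $F$ at scale $1/Y$.

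First I would bound $F$ and its derivative: trivially $\|F\|_\infty\ll Y^{1/2+\eps}$ by $|a(n)|\ll n^\eps$, and $\|F'\|_\infty\ll Y^{3/2+\eps}$ since differentiating pulls down a factor $2\pi n\le 2\pi Y$. Hence for $x,y$ with $|x-y|\le 1/q$, using $\big||u|^s-|v|^s\big|\le s(|u|^{s-1}+|v|^{s-1})|u-v|$ when $s\ge 1$ and $\big||u|^s-|v|^s\big|\le|u-v|^s$ when $s<1$, one gets $\big||F(x)|^s-|F(y)|^s\big|\ll Y^{?}\,|x-y|^{\min(1,s)}$ for a power of $Y$ that is at most polynomial. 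Next, partition $[0,1]$ into the $q$ intervals $[a/q,(a+1)/q)$, $0\le a<q$; on each, $|F(\alpha)|^s$ differs from $|F(a/q)|^s$ by the Lipschitz bound above times $(1/q)^{\min(1,s)}$, so
\[
\int_0^1 |F(\alpha)|^s\,d\alpha = \frac1q\sum_{0\le a<q}|F(a/q)|^s + O\!\big(Y^{O(1)} q^{-\min(1,s)}\big).
\]
Because $q\ge Y^{10}$, the error is $\ll Y^{-s}$ after choosing the implied exponents carefully (here one uses that $\min(1,s)\cdot 10$ comfortably beats the polynomial loss plus $s$). This reduces matters to replacing the full sum $\frac1q\sum_{0\le a<q}$ by the reduced sum $\frac1{\varphi(q)}\sum_{(a,q)=1}$.

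To pass from all residues to coprime residues, I would write $\frac1q\sum_{0\le a<q}|F(a/q)|^s = \frac1q\sum_{d\mid q}\sum_{(b,q/d)=1}|F(bd/q)|^s$ and note that $F(bd/q)$ is itself an exponential sum of length $Y$ with denominator $q/d$; if $q/d\ge Y^{10}$ we reapply the interval-comparison argument to see that $\frac{1}{\varphi(q/d)}\sum_{(b,q/d)=1}|F(b/(q/d))|^s = \int_0^1|F|^s + O(Y^{-s})$, while the contribution of divisors $d$ with $q/d < Y^{10}$ is negligible since those residues form a set of relative density $\ll Y^{10}/q\le Y^{-\mathrm{?}}$ and $|F|^s\ll Y^{O(1)}$. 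Collecting terms and using $\sum_{d\mid q}\varphi(q/d)=q$ gives $\frac1q\sum_{0\le a<q}|F(a/q)|^s = \frac1{\varphi(q)}\sum_{(a,q)=1}|F(a/q)|^s + O(Y^{-s})$ (after absorbing $\varphi(q)\asymp q$ up to $q^\eps$), which combined with the previous display yields the lemma.

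The main obstacle is the lack of smoothness of $|F|^s$ near the zeros of $F$ when $s<1$, where the difference inequality only gives Hölder-$s$ continuity rather than Lipschitz continuity; this is exactly why the bound $q\ge Y^{10}$ (rather than something like $q\ge Y^{1+\eps}$) is needed, so that $q^{-\min(1,s)}$ times the polynomial-in-$Y$ derivative losses still beats $Y^{-s}$. One must track these exponents honestly — in particular the $\|F'\|_\infty\ll Y^{3/2+\eps}$ bound, which contributes the dominant polynomial growth — but no new ideas are required beyond being careful, and $10$ is a comfortably safe exponent. A secondary (entirely routine) point is checking that the divisors $d\mid q$ with $q/d<Y^{10}$ genuinely contribute a negligible amount, which follows from $\tau(q)\ll q^\eps$ and the crude bound on $|F|^s$.
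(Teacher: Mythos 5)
Your opening step --- comparing $\int_0^1|F(\alpha)|^s\,d\alpha$ with the average of $|F(a/q)|^s$ over \emph{all} residues $a \bmod q$ via the Lipschitz/H\"older continuity of $|F|^s$ --- is sound, and is close in spirit to the perturbation step in the paper's proof. The gap is in the passage from the full average to the coprime average, which is the actual content of the lemma. When you decompose $\frac1q\sum_{0\le a<q}|F(a/q)|^s=\frac1q\sum_{d\mid q}\sum_{(b,q/d)=1}|F(b/(q/d))|^s$ and assert that ``reapplying the interval-comparison argument'' yields $\frac{1}{\varphi(q/d)}\sum_{(b,q/d)=1}|F(b/(q/d))|^s=\int_0^1|F|^s+O(Y^{-s})$, you are invoking precisely the statement being proved, now for the modulus $q/d$: the interval comparison only controls the average over \emph{all} residues mod $q/d$, not the reduced ones, so this is circular unless organized as an induction over the divisor lattice (which you have not done, and which would also require tracking how the error constant propagates, since the coprime residues carry weight $\varphi(q)/q$ which may be bounded away from $1$). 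Moreover, the disposal of the divisors with $q/d<Y^{10}$ is invalid at the boundary of the hypothesis: those residues have relative density $\ll d(q)Y^{10}/q$, which is not small when $q\asymp Y^{10}$. For $q=Y^{10}$ they are exactly the residues with $(a,q)>1$, a positive proportion of all residues, and their contribution is generically of constant size, not $O(Y^{-s})$. So the proof as written does not establish the lemma for all $q\ge Y^{10}$.

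Both defects point to the same missing ingredient: some equidistribution input for \emph{reduced} residues, i.e.\ some avatar of $\sum_{d\mid q}\mu(d)/d=\varphi(q)/q$. One repair along your lines is to invert the divisor decomposition: writing $A(m)=\frac1m\sum_{0\le a<m}|F(a/m)|^s$ and $R(m)=\frac1{\varphi(m)}\sum_{(a,m)=1}|F(a/m)|^s$, M\"obius inversion gives $\varphi(q)R(q)=\sum_{d\mid q}\mu(d)\,(q/d)\,A(q/d)$, and applying your interval comparison to each $A(q/d)$ (the large errors for tiny $q/d$ are damped by the weight $(q/d)/\varphi(q)$) does close the argument with room to spare under $q\ge Y^{10}$. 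The paper instead proves the needed equidistribution directly: it replaces each point value $|F(a/q)|^s$ by its average over the interval $[a/q,\,a/q+q^{-1/2}]$ (the perturbation inside $F$ is $O(Y^2q^{-1/2})\le Y^{-3}$, hence costs $O(Y^{-s})$ after taking $s$-th powers), unfolds the sum into $\int_0^1|F(\alpha)|^s$ weighted by the number of reduced residues mod $q$ in an interval of length $q^{1/2}$, and evaluates that count as $\frac{\varphi(q)}{q}q^{1/2}+O(d(q))$ by M\"obius inversion (Lemma \ref{lem:equidistribute_rred_class}). Without some such input the coprimality constraint is never actually resolved, and that is the genuine gap in your proposal.
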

It should be possible to show this for $q\ge Y^{1 + \eps}$ for any $\eps > 0$ 
(and an error of $Y^{-c\eps s}$), though our result suffices.

To prove this lemma, we first prove the following crude result on the equidistribution of reduced residue classes in short intervals. 
\begin{lemma} 
    For $q\ge 1, H\ge 1$, we have that
    \[
        N_q([x, x + H]) = \frac{\varphi(q)}{q}H + O(d(q)).
    \]
    where 
    \[
        N_q(I) = \sum_{\substack{n\in I\\ (n, q) = 1}} 1
    \]
    \label{lem:equidistribute_rred_class}
\end{lemma}
\begin{proof}
    By M\"obius inversion, we have that for any $D\ge 1$
    \begin{align*}
        \sum_{\substack{x\le n\le x + H\\ (n, q) = 1}} 1 &= \sum_{\substack{x\le n\le x + H\\ (n, q) = 1}}\sum_{\substack{d | n\\ d|q}}\mu(d)\\
        &= \sum_{d | q}\mu(d)\sum_{\substack{x\le n\le x + H\\ d | n}} 1\\
        &= \sum_{d | q} \mu(d)\left(\frac{H}{d} + O(1)\right) = H\sum_{d | q}\frac{\mu(d)}{d} + O(d(q)).
    \end{align*}
    The desired result follows upon noting that $\frac{\varphi(q)}{q} = \sum_{d | q}\frac{\mu(d)}{d}$.
\end{proof}
\begin{proof}[Proof of Lemma \ref{lem:coprime_riemann_sum}]
    First, note that for all $|\beta|\le q^{-\frac{1}{2}}$, we have that
    \[\sum_{n\le Y} a(n)e\pfrc{an}{q}e(n\beta) = \sum_{n\le Y} a(n)e\pfrc{an}{q} + O(Y^2|\beta|).\]
    From our bound on $|\beta|$, the error term then must be $\le Y^{-3}$. It follows from Lemma \ref{lem:equidistribute_rred_class} that
    \begin{align*}
        \frac{1}{\varphi(q)}&\sumCp_{a(q)}\bigg|\frac{1}{\sqrt{Y}}\sum_{n\le Y} a(n)e\pfrc{an}{q}\bigg|^s \\
        &= \frac{1}{\varphi(q)}\sumCp_{a(q)}q^{\frac{1}{2}}\int_{0}^{q^{-\frac{1}{2}}}
        \bigg|\frac{1}{\sqrt{Y}}\sum_{n\le Y} a(n)e\pfrc{an}{q}e(n\beta)\bigg|^sd\alpha + O(Y^{-s})\\
        &= \frac{q^{\frac{1}{2}}}{\varphi(q)}\int_0^1 \bigg|\frac{1}{\sqrt{Y}}\sum_{n\le Y}a(n)e(n\alpha)\bigg|^s
        N_q([q\alpha, q\alpha + q^{\frac{1}{2}}])d\alpha + O(Y^{-s})\\
        &= \int_0^1 \bigg|\frac{1}{\sqrt{Y}}\sum_{n\le Y}a(n)e(n\alpha)\bigg|^sd\alpha 
        + O\bigg(q^{-1 + \eps}\int_0^1 \bigg|\frac{1}{\sqrt{Y}}\sum_{n\le Y}a(n)e(n\alpha)\bigg|^sd\alpha\bigg).
    \end{align*}
    By H\"older, the error is at most $q^{-1 + \eps}$, and the desired result follows.
\end{proof}
Applying Lemma \ref{lem:coprime_riemann_sum}, we obtain that for $0 < s < 2$
\begin{align*}
    I_\star^s = \frac{1}{L}\sum_{q\sim Q}\varphi(q)\int_0^1\bigg|\frac{1}{\sqrt{X_1(q/Q)^2}}&\sum_{n\le X_1X^{2\delta_1}}
    \lambda_\star(n)e(n\alpha) \mc I_\star w\pfrc{n}{X_1(q/Q)^2}\bigg|^sd\alpha\\
    &+ O(X^{-\frac{\delta}{4}s(2 - s) + \eps}).
    \numberthis\label{eq:jut_riemann}
\end{align*}
We now show that the sum over $q\sim Q$ may be turned into an integral over $[Q, 2Q]$ at a small loss. We first eliminate the $\varphi(q)$ at the cost of averaging over $q$ in a short interval. 
To see that this is so, consider some $q\sim Q$ and a real $\tilde q\in [Q, 2Q]$. Then, we have that 
\[
    \frac{1}{\sqrt{X_1(q/Q)^2}} = \frac{1}{\sqrt{X_1(\tilde q/Q)^2}} + O\bigg(|q - \tilde q|\frac{1}{Q\sqrt{X_1}}\bigg).
\]
Note that $Q\sqrt{X_1}\gg X_1^{20}$. We also have that from the derivative bounds on $\mc I_\star w$ that for 
$n\le X_1X^{2\delta_1}$
\[
    \mc I_\star w\pfrc{n}{X_1(q/Q)^2} - \mc I_\star w\pfrc{n}{X_1(\tilde q/Q)^2}\ll |q - \tilde q|\cdot\frac{X^{3\delta_1}}{Q}.
\]
Then, for all $\alpha\in\RR$
\begin{align*}
    &\frac{1}{\sqrt{X_1(q/Q)^2}} \sum_{n\le X_1X^{2\delta_1}}\lambda_\star(n)e(n\alpha)\mc I_\star w\pfrc{n}{X_1(q/Q)^2}\\
    &= \frac{1}{\sqrt{X_1(q/Q)^2}} \sum_{n\le X_1X^{2\delta_1}}\lambda_\star(n)e(n\alpha)\mc I_\star w\pfrc{n}{X_1(\tilde q/Q)^2}\\
    &\hspace{2cm} + O\bigg(|q - \tilde q|\sqrt{X_1}X^{5\delta_1}Q^{-1}\bigg)\\
    &= \frac{1}{\sqrt{X_1(\tilde q/Q)^2}} \sum_{n\le X_1X^{2\delta_1}}\lambda_\star(n)e(n\alpha)\mc I_\star w\pfrc{n}{X_1(\tilde q/Q)^2}\\
    &\hspace{2cm} + O\bigg(|q - \tilde q|\sqrt{X_1}X^{5\delta_1}Q^{-1}\bigg).
\end{align*}
If $|q - \tilde q|\ll Q^{\frac{1}{10}}$, note that we must have $|q - \tilde q|\sqrt{X_1}X^{5\delta_1}Q^{-1}\ll X^{-20\delta}$ so
since $\tilde q/q = 1 + O(Q^{-9/10})$ (and since by Plancherel and Ho\"older, the below is $\ll X^\eps$), we have
\begin{align*}
    &\int_0^1 
    \bigg|\frac{1}{\sqrt{X_1(q/Q)^2}}\sum_{n\le X_1X^{2\delta_1}}\lambda_\star(n)e(n\alpha)\mc I_\star w\pfrc{n}{X_1(q/Q)^2}\bigg|^sd\alpha\\
    &= 
    \int_ 0^1
    \bigg|\frac{1}{\sqrt{X_1(\tilde q/Q)^2}}\sum_{n\le X_1X^{2\delta_1}}\lambda_\star(n)e(n\alpha)\mc I_\star w\pfrc{n}{X_1(\tilde q/Q)^2}
    \bigg|^sd\alpha + O(X^{-10\delta s + \eps}).\\ 
    &= \frac{\tilde q}{q}\int_ 0^1
    \bigg|\frac{1}{\sqrt{X_1(\tilde q/Q)^2}}\sum_{n\le X_1X^{2\delta_1}}\lambda_\star(n)e(n\alpha)\mc I_\star w\pfrc{n}{X_1(\tilde q/Q)^2}
    \bigg|^sd\alpha + O(X^{-10\delta s + \eps}).
    \numberthis\label{eq:q_interp}
\end{align*}

Now, let $\set{I_j}_{1\le j\le K}$ be a partition of $[Q, 2Q]$ into intervals of length $\asymp Q^{\frac{1}{10}}$ for some 
$K\asymp Q^{1 - \frac{1}{10}}$. We obtain from (\ref{eq:q_interp}), adding redundant averaging over $\tilde q\sim Q$, that
\begin{align*}
    \frac{1}{L}\sum_{q\sim Q}\varphi(q)&\int_0^1\bigg|\frac{1}{\sqrt{X_1(q/Q)^2}}\sum_{n\le X_1X^{2\delta_1}}
    \lambda_\star(n)e(n\alpha) \mc I_\star w\pfrc{n}{X_1(q/Q)^2}\bigg|^s d\alpha\\
    = \frac{1}{L}\sum_{j = 1}^K&\sum_{q\in I_j}\varphi(q)\int_0^1\bigg|\frac{1}{\sqrt{X_1(q/Q)^2}}\sum_{n\le X_1X^{2\delta_1}}
    \lambda_\star(n)e(n\alpha) \mc I_\star w\pfrc{n}{X_1(q/Q)^2}\bigg|^s d\alpha\\ 
    = \frac{1}{L}&\sum_{j = 1}^K \sum_{q\in I_j}\frac{\varphi(q)}{q} 
    \frac{1}{|I_j|}\int_{I_j} \tilde q\int_0^1 [\dots] d\alpha d\tilde q + O(X^{-10\delta s + \eps})
    \numberthis\label{eq:q_interval_avg}
\end{align*}
where $[\dots]$ is  
\[
\bigg|\frac{1}{\sqrt{X_1(\tilde q/Q)^2}}\sum_{n\le X_1X^{2\delta_1}}\lambda_\star(n)e(n\alpha)
\mc I_\star w\pfrc{n}{X_1(\tilde q/Q)^2}\bigg|^s. 
\]
It follows from well-known elementary results that 
\[L = \frac{9}{\pi^2}Q^2 + O(Q\log Q), \sum_{n\in I_j}\frac{\varphi(q)}{q} = \frac{6}{\pi^2}|I_j| + O(\log Q).\]
Thus (\ref{eq:q_interval_avg}) equals 
\[
    \frac{2}{3Q^2}\int_Q^{2Q}\tilde q\int_0^1\bigg|\frac{1}{\sqrt{X_1(\tilde q/Q)^2}}\sum_{n\le X_1X^{2\delta_1}}
    \lambda_\star(n)e(n\alpha)\mc I_\star w\pfrc{n}{X_1(\tilde q/Q)^2}\bigg|^sd\alpha d\tilde q
\]
plus an error of $Q^{-1 + \eps}\ll X^{-\frac{1}{2}}$. By the change of variables $t = \tilde q/Q$, this equals
\[
    \frac{2}{3}\int_1^2 t\int_0^1\bigg|\frac{1}{\sqrt{X_1t^2}}\sum_{n\le X_1X^{2\delta_1}}
    \lambda_\star(n)e(n\alpha)\mc I_\star w\pfrc{n}{X_1t^2}\bigg|^sd\alpha dt.
\]
Completing the sum over $n$ with the same method by which it was removed, and gathering up 
(\ref{eq:smooth_error_d}), (\ref{eq:jut_riemann}), we obtain that for some $\kappa > 0$
\begin{multline*}
    \int_0^1\bigg|\frac{1}{\sqrt{X'}}\sum_{n\le X'}\lambda_\star(n)e(n\alpha)\bigg|^sd\alpha\\ 
    = \frac{2}{3}\int_1^2 t\int_0^1\bigg|\frac{1}{t\sqrt{X_1}}\sum_{n}\lambda_\star(n)e(n\alpha)\mc I_\star w\pfrc{n}{X_1t^2}\bigg|^s d\alpha dt
    + O(X^{-\kappa s(2 - s)}).
\end{multline*}
for any $X_1\asymp X^{2\delta}$. The desired result follows.
\subsection{Proof of (\ref{eq:moment_pre_high})}

This section proceeds a similar fashion to the previous section, so we shall refer to it heavily and only indicate those 
places in which the two differ. What allows us to deal with higher moments
is the following bound of Jutila (which improves by a factor of $\log X$ on an older bound of Wilton, which we could have also used):
\begin{proposition}[\cite{J}]
    For all $L, R$, we have that
    \[
        \bigg|\sum_{L < n\le R} \lambda_f(n)e(n\alpha)\bigg|\ll R^{\frac{1}{2}}.
    \]
    \label{prop:jut_bound}
\end{proposition}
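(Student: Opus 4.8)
The plan is to follow Jutila's argument from \cite{J}, which combines a Farey dissection of the unit interval with the additively twisted Voronoi summation formula for $f$. First one reduces to the case $L=0$: since $\sum_{L<n\le R}\lambda_f(n)e(n\alpha)$ is a difference of two sums of the shape $\sum_{n\le M}\lambda_f(n)e(n\alpha)$ with $M\le R$, it suffices to prove $\sum_{n\le R}\lambda_f(n)e(n\alpha)\ll R^{1/2}$ uniformly in $\alpha$ and $R$. By Dirichlet's theorem, write $\alpha=a/q+\beta$ with $(a,q)=1$, $1\le q\le R^{1/2}$ and $|\beta|\le q^{-1}R^{-1/2}$, so that $R|\beta|\le R^{1/2}/q$.

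One then decomposes $\sum_{n\le R}\lambda_f(n)e(n\alpha)$ into $O(\log R)$ dyadic blocks $\sum_{n\asymp N}$, $N\le R$, absorbing $e(n\beta)$ into a smooth unit-scale weight. Applying Voronoi summation for $f$ with modulus $q$ (there is no main term, $f$ being cuspidal) turns such a block into $\tfrac Nq\sum_m\lambda_f(m)e(-\conj a m/q)\,\mathcal J_N(mN/q^2)$, where $\mathcal J_N$ is the $J_{k-1}$-Bessel transform of the oscillatory weight $y\mapsto e(yN\beta)w(y)$. With $H_N=\max(1,N|\beta|)\le R^{1/2}/q$ playing the role of an oscillation parameter, stationary phase applied with the asymptotics of $J_{k-1}$ shows that $\mathcal J_N(x)$ is negligible unless $x\asymp H_N^2$, where $\mathcal J_N(x)\ll H_N^{-1}$; keeping track of the surviving positive integers $m$ (of which there are $\asymp H_N^2q^2/N$ when the block is not negligible), each block contributes $\ll \tfrac Nq\cdot\tfrac{H_N^2q^2}{N}\cdot H_N^{-1}=H_Nq\ll R^{1/2}$. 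Summing over the dyadic scales then gives $\sum_{n\le R}\lambda_f(n)e(n\alpha)\ll R^{1/2}\log R$, which is Wilton's bound.

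The logarithm here comes entirely from summing over the $\asymp\log R$ dyadic scales --- most acutely when $q\asymp R^{1/2}$, where every block is already of size $\asymp R^{1/2}$ --- and removing it is precisely Jutila's contribution. One cannot simultaneously make a cutoff at $R$ smooth enough for Voronoi to localize the dual sum efficiently and keep the smoothing error below $R^{1/2}$; instead one applies Voronoi once, to a single carefully chosen weight approximating $\charf{[0,R]}$, and controls the resulting dual series --- now infinite, the Bessel transform of a sharp cutoff decaying only like $x^{-1/4}$ --- by exploiting its oscillation together with the Rankin--Selberg estimate $\sum_{n\le R}|\lambda_f(n)|^2\ll R$ (cf. (\ref{eq:rankin_selberg})). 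The main obstacle I would expect is exactly this: establishing the stationary-phase bound for $\mathcal J_N$ uniformly in $q$ and $\beta$, and summing the dual series without the logarithmic loss; the details are carried out in \cite{J}.
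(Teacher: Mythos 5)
This proposition is quoted from Jutila \cite{J}; the paper gives no argument for it at all, so there is nothing internal to compare against --- the only question is whether your sketch would stand on its own as a proof. It would not, and the gap is precisely at the decisive point. The Farey/Dirichlet dissection plus dyadic decomposition plus Voronoi--stationary-phase accounting you describe is the standard route, and (modulo details you gloss over: you need Cauchy--Schwarz with Rankin--Selberg, i.e.\ $\sum_{m\le M}|\lambda_f(m)|^2\ll M$, to bound the dual sum without picking up an $R^{\eps}$ from Deligne, and the Bessel transition ranges and the smooth dyadic partition need care) it delivers Wilton's bound $\ll R^{1/2}\log R$. But the content of the proposition as stated is exactly the removal of that logarithm, and for this you offer only a vague gesture (``apply Voronoi once to a carefully chosen weight and exploit the oscillation of the dual series'') before deferring to \cite{J}. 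So as a blind proof attempt, the key idea is missing: you have reproved the older, weaker estimate and cited the reference for the actual statement --- which is no more than what the paper itself does by attributing the result to Jutila.

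Two mitigating remarks. First, since the paper treats this as an external black box, ``defer to \cite{J}'' is arguably the intended treatment, and your honesty about where Jutila's contribution lies is to your credit. Second, the paper explicitly notes that Wilton's bound, weaker by a factor of $\log X$, would have sufficed for its applications (it is used to get $S_f(\alpha;X)\ll\sqrt{X}$-type control in the $s\ge 2$ analysis and the lower bound via H\"older, where a logarithm is harmless to the exponents); so the part of the argument you actually sketch would be adequate for the paper's purposes, even though it does not prove the proposition in the log-free form stated.
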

As a corollary of this and partial summation, it follows that for any $Y_1\le Y$
\[
    \bigg|\sum_{Y_1 < n\le Y} \lambda_f(n)e(n\alpha)w\pfrc{n}{Y}\bigg|\ll Y^{\frac{1}{2}}.
\]

For the rest of the section, we also suppose that $s\ge 2$.
By (\ref{eq:smooth_error_f}), it suffices to show that for $X'\asymp X$
\[
    I_f^s = \frac{2}{3}\int_1^2 t\int_0^1\bigg|\frac{1}{t\sqrt{X_1}}\sum_{n}\lambda_f(n)e(n\alpha)\mc I_f w\pfrc{n}{X_1t^2}\bigg|^s d\alpha dt + O(X^{-\eta})
\]
for some $\eta > 0$, and this is what the rest of the section is devoted to showing. 

We now show the following analogue of (\ref{eq:jut_nophase})

\begin{lemma}
    Suppose that \[Q\asymp X^{\frac{1}{2} + \delta}, L = \sum_{q\sim Q}\varphi(q).\]
    Then, we have that for $s\ge 2$
    \begin{equation}
        I_f^s = \frac{1}{L}\sum_{q\sim Q}\sumCp_{a(q)} \bigg|\sum_n \lambda_f(n)e\pfrc{an}{q}w\pfrc{n}{X}\bigg|^s 
        + O(X^{-\frac{\delta}{4}}).
        \label{eq:jut_nophase_high}
    \end{equation}
\end{lemma}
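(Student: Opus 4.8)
The plan is to follow the template of the proof of Lemma \ref{lem:jut_nophase}, making the modifications needed to handle $s \geq 2$. As before, set $\phi$ to be a nonzero smooth bump function on $[1,2]$ with $\int\phi = 1$, put $H = X^\delta$, $\Delta = H/Q^2$, and define $\tilde\chi$ by the same formula. The pointwise bound $\tilde\chi(\alpha)\ll Q^\eps$ from spacing of the fractions $a/q$ still holds. We then introduce $\tilde I_f^s = \int_0^1 |X'^{-1/2}S_f(\alpha;X')w(\cdot/X')|^s\tilde\chi(\alpha)\,d\alpha$, expand it using the definition of $\tilde\chi$, and compare it to $I_f^s$. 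The key structural difference from the low-moment case is that for $s \geq 2$ we no longer interpolate against the $L^2$ norm via H\"older; instead we exploit Jutila's bound (Proposition \ref{prop:jut_bound}) together with partial summation, which gives the \emph{pointwise} estimate $X'^{-1/2}|\sum_n \lambda_f(n)e(n\alpha)w(n/X')| \ll 1$ uniformly in $\alpha$.

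First I would estimate $|I_f^s - \tilde I_f^s|$. Writing $g(\alpha) = X'^{-1/2}S_f(\alpha;X')w(\cdot/X')$, we have $|I_f^s - \tilde I_f^s| = |\int_0^1 |g(\alpha)|^s(1 - \tilde\chi(\alpha))\,d\alpha|$. Since $|g(\alpha)|\ll 1$ pointwise, $|g(\alpha)|^s \ll |g(\alpha)|^2$, so this is $\ll \int_0^1 |g(\alpha)|^2 |1 - \tilde\chi(\alpha)|\,d\alpha \ll (\int_0^1 |g|^4)^{1/2}(\int_0^1 |1-\tilde\chi|^2)^{1/2}$ by Cauchy--Schwarz. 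Now $\int_0^1 |g|^4 \ll \|g\|_\infty^2 \int_0^1 |g|^2 \ll X^\eps$ by Plancherel and the pointwise bound, while Proposition \ref{prop:jut_circle_method} gives $\int_0^1 |1-\tilde\chi|^2 \ll Q^4/(HL^2) + Q^{2+\eps}/L^2 \ll X^{-\delta + \eps}$ since $L\asymp Q^2$ and $H = X^\delta$. Hence $|I_f^s - \tilde I_f^s| \ll X^{-\delta/2 + \eps}$, which is acceptable (one absorbs the $\eps$ by shrinking the exponent to $\delta/4$).

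Next I would handle $\tilde I_f^s$ itself. By the definition of $\tilde\chi$,
\[
\tilde I_f^s = \int_1^2 \phi(t)\sum_{q\sim Q}\sumCp_{a(q)}\bigg|\frac{1}{\sqrt{X'}}\sum_n \lambda_f(n)e\pfrc{an}{q}e(nt\Delta)w\pfrc{n}{X'}\bigg|^s dt,
\]
and for $\beta\in[\Delta,2\Delta]$ the factor $e(n\beta)$ can be removed at the cost of an error $O(X\Delta\, E(q,a,\beta))$ with $E(q,a,\beta)$ the maximal partial sum, exactly as in Lemma \ref{lem:jut_nophase}. Using $(A + O(B))^s = A^s + O(B^s + A^{s-1}B)$ for $s\geq 1$, together with the pointwise bound $A = X'^{-1/2}|\sum_n \lambda_f(n)e(an/q)w(n/X')| \ll 1$ (again Proposition \ref{prop:jut_bound} and partial summation), the introduced error is $\ll (X\Delta)^s E^s + (X\Delta) E \ll (X\Delta)(E^s + 1)$ since $X\Delta \ll X^{-\delta} \leq 1$ and $E\ll 1$. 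Averaging over $a$ and $q$ and invoking the maximal large sieve of Montgomery (the frequencies $a/q$ are $Q^{-2}$-separated, the sum has length $\ll X$, and $\sum_n |\lambda_f(n)|^2 w(n/X')^2 \ll X^{1+\eps}$) gives $\frac{1}{L}\sum_{q\sim Q}\sumCp_{a(q)} E(q,a,\beta)^s \ll X^\eps$ after one more application of H\"older to pass from the $L^2$ bound to the $L^s$ bound (here one uses $E\ll 1$ pointwise so $E^s \ll E^2$), and similarly $\frac{1}{L}\sum_{q,a} 1 \ll 1$. Thus the total error from removing the phase is $\ll (X\Delta)X^\eps \ll X^{-\delta + \eps}$, which dominates the earlier error only up to the $X^{-\delta/4}$ bound claimed. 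Finally, after removing the phase, the $t$-integral against $\phi(t)$ with $\int\phi = 1$ collapses and $\tilde I_f^s$ becomes $\frac{1}{L}\sum_{q\sim Q}\sumCp_{a(q)}|X'^{-1/2}\sum_n \lambda_f(n)e(an/q)w(n/X')|^s$; since $X'\asymp X$ one may replace $X'$ by $X$ inside $w(n/X')$ at negligible cost (or simply keep the notation $X$ as in the statement, noting the mild abuse), yielding (\ref{eq:jut_nophase_high}).

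The main obstacle — and the place where the argument genuinely differs from the $0<s<2$ case — is that for large $s$ one cannot afford the crude interpolation against $\|g\|_2$ that drove the low-moment estimate; everything hinges on having the \emph{uniform} pointwise bound $g(\alpha)\ll 1$, i.e. on Jutila's theorem, to convert $|g|^s$ into something square-integrable. Once that is in hand, the error analysis is routine: each error term carries a factor $X\Delta = X^{-\delta}$ or $(\int|1-\tilde\chi|^2)^{1/2} = X^{-\delta/2+\eps}$, and the maximal large sieve controls the maximal partial sums uniformly. The only mild care needed is in applying $(A+O(B))^s = A^s + O(B^s + A^{s-1}B)$ and then noting that because both $A$ and $B/ (X\Delta)$ are $\ll X^\eps$-bounded after averaging, the error is genuinely a power saving in $X$.
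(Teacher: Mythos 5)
Your proof is correct and follows essentially the same route as the paper: Jutila's circle-method proposition to pass to the $\tilde\chi$-weighted integral, Jutila's pointwise bound $S_f(\alpha;X)\ll\sqrt X$ (plus partial summation) to control the $s\ge 2$ moments, and removal of the small phase $e(n\beta)$, $\beta\asymp\Delta$, with error $O(X\Delta)\ll X^{-\delta}$. The only differences are cosmetic detours: the paper bounds $|I_f^s-\tilde I_f^s|\ll\int_0^1|1-\tilde\chi|\,d\alpha$ directly from the uniform bound $|g(\alpha)|\ll 1$ (rather than via $\|g\|_4$ and Cauchy--Schwarz), and it never needs Montgomery's maximal large sieve, since the same uniform bound already gives $E(q,a,\beta)\ll 1$ pointwise and hence a term-by-term phase-removal error $\ll X^{-\delta}$ --- your large-sieve step is harmless but redundant.
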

\begin{proof}
    Let $\phi$ be some nonzero smooth function supported on $[1, 2]$ with $\hat\phi(0) = 1$. 
    Also, as in the previous subsection, let 

    \[
        H = X^\delta, \Delta = \frac{H}{Q^2},
    \] 
    \[
        \tilde\chi(\alpha) = \frac{1}{\Delta L}\sum_{q\sim Q}\sumCp_{a(q)}\phi(\Delta^{-1}(\alpha - a/q)),
    \]
    \[
        \tilde I_\star^s = 
        \int_0^1 \bigg|\frac{1}{\sqrt{X'}}\sum_{n}\lambda_\star(n)e(n\alpha)w\pfrc{n}{X'}\bigg|^s
        \tilde\chi(\alpha)d\alpha.
    \]

    Then, by Proposition \ref{prop:jut_bound} and partial summation, Proposition \ref{prop:jut_circle_method}, and Cauchy Schwarz
    \begin{equation}
        I_f^s -  \tilde I_f^s\ll\int_0^1 |1 - \tilde\chi(\alpha)|d\alpha\ll\bigg(\int_0^1 |1 - \tilde\chi(\alpha)|^2d\alpha\bigg)^{\frac{1}{2}}
        \ll X^{-\frac{\delta}{2}}, 
        \label{eq:jut_high_moment}
    \end{equation}
    so the error from replacing $I_f^s$ with $\tilde I_f^s$ is admissible.
    For $\beta\sim\Delta$, we have that by partial summation and Proposition \ref{prop:jut_bound} 
    \[
        \frac{1}{\sqrt{X}}\sum_n \lambda_f(n)e\pfrc{an}{q}e(n\beta)w\pfrc{n}{X} 
        = \frac{1}{\sqrt{X}}\sum_n \lambda_f(n)e\pfrc{an}{q}w\pfrc{n}{X} + O(X^{-\delta})
    \]
    so 
    \[
        \bigg|\frac{1}{\sqrt{X}}\sum_n \lambda_f(n)e\pfrc{an}{q}e(n\beta)w\pfrc{n}{X}\bigg|^s 
        = \bigg|\frac{1}{\sqrt{X}}\sum_n \lambda_f(n)e\pfrc{an}{q}w\pfrc{n}{X}\bigg|^s + O(X^{-\delta})
    \]
    and the desired result follows from this and (\ref{eq:jut_high_moment}) immediately.
\end{proof}

Take $Q = (X'X_1)^{1/2}$. Applying Voronoi and bounds for tails of $\mc I_f w$ 
as in the previous section, we obtain that
\[
    I_f^s = \frac{1}{L}\sum_{q\sim Q}\sumCp_{a(q)}\bigg|\frac{1}{\sqrt{X_1(q/Q)^2}}\sum_{n\le X_1X^{2\delta_1}}
    \lambda_f(n)e\pfrc{an}{q}\mc I_f w\pfrc{n}{X_1(q/Q)^2}\bigg|^s.
\]
plus an error of $O(X^{-\frac{\delta}{2}})$

We now require the following analogue of Lemma \ref{lem:coprime_riemann_sum} for high moments.
\begin{lemma}
    Suppose that $q\ge (YY')^{20}$. Also, suppose that $w$ is smooth and compactly 
    supported away from $1$ with $\int |w'|\ll 1, |w^{(j)}|\ll Y'^{j}$. Then we have that for 
    $s\ge 1$ and $Y$ sufficiently large
    \begin{align*}
        \frac{1}{\varphi(q)}\sumCp_{a(q)}
        \bigg|\frac{1}{\sqrt{Y}}&\sum_{n\le YY'^2} \lambda_f(n)e\pfrc{an}{q}\mc I_fw\pfrc{w}{Y}\bigg|^s \\
        &= \int_0^1\bigg|\frac{1}{\sqrt{Y}}\sum_{n\le YY'^2} \lambda_f(n)e(n\alpha)\mc I_fw\pfrc{w}{Y}\bigg|^sd\alpha + O( (YY')^{-1} ).
    \end{align*}
    \label{lem:coprime_riemann_sum_high}
\end{lemma}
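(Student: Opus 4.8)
The plan is to repeat the proof of Lemma \ref{lem:coprime_riemann_sum} almost verbatim, the one change being that the trivial coefficient bound $|a(n)|\ll n^\eps$ used there is replaced by Jutila's square-root cancellation estimate (Proposition \ref{prop:jut_bound}); it is precisely this that lets us handle all $s\ge 1$ rather than just $s<2$. Throughout, write
\[
    g(\alpha) = \frac{1}{\sqrt{Y}}\sum_{n\le YY'^2}\lambda_f(n)e(n\alpha)\,\mc I_f w\pfrc{n}{Y},
\]
so the assertion is $\frac{1}{\varphi(q)}\sumCp_{a(q)}|g(a/q)|^s = \int_0^1 |g(\alpha)|^s\,d\alpha + O((YY')^{-1})$.

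The first step is to record two a priori estimates for $g$. Since $\mc I_f w$ and its derivatives obey the bounds of Proposition \ref{prop:voronoi_summation} and $\mc I_f w(x)$ decays rapidly once $x\ge Y'^{1+\delta}$ (so the truncation at $n\le YY'^2$ is harmless and the boundary term in partial summation is negligible), applying partial summation to Proposition \ref{prop:jut_bound} yields a uniform bound $g(\alpha)\ll Y'^{O(1)}$; it is here that the hypothesis $\int|w'|\ll 1$ is used, to control the total variation of $\mc I_f w$. Separately, Plancherel together with $|\lambda_f(n)|\le d(n)$ and $\sum_{n\le N}d(n)^2\ll N(\log N)^3$ gives $\int_0^1|g(\alpha)|^2\,d\alpha\ll Y'^{O(1)}(\log Y)^3$. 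Combining the two --- H\"older for $1\le s\le 2$, and $\int|g|^s\le\|g\|_\infty^{s-2}\int|g|^2$ for $s>2$ --- produces
\[
    \int_0^1|g(\alpha)|^s\,d\alpha\ll Y'^{O_s(1)}(\log Y)^{O_s(1)}.
\]

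With these in hand, the main argument follows Lemma \ref{lem:coprime_riemann_sum}. For $|\beta|\le q^{-1/2}$ one has $g(a/q+\beta)=g(a/q)+O((YY')^{-A})$ for any fixed $A$, from $|e(n\beta)-1|\ll n|\beta|$, $|\lambda_f(n)|\le d(n)$ and $q\ge(YY')^{20}$; combined with $\big||x|^s-|y|^s\big|\ll\max(|x|,|y|)^{s-1}|x-y|$ and the uniform bound above, the same holds after taking $s$-th powers. Averaging over $\beta\in[0,q^{-1/2}]$ and over the reduced residues $a\bmod q$ and setting $\alpha=a/q+\beta$ rewrites $\frac{1}{\varphi(q)}\sumCp_{a(q)}|g(a/q)|^s$ as $\frac{q^{1/2}}{\varphi(q)}\int_0^1 |g(\alpha)|^s\,N_q([q\alpha,q\alpha+q^{1/2}])\,d\alpha$ up to a negligible error, and Lemma \ref{lem:equidistribute_rred_class} gives $N_q([q\alpha,q\alpha+q^{1/2}])=\tfrac{\varphi(q)}{q}q^{1/2}+O(d(q))$. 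The main term is exactly $\int_0^1|g(\alpha)|^s\,d\alpha$, while the error is
\[
    \ll\frac{q^{1/2}d(q)}{\varphi(q)}\int_0^1|g(\alpha)|^s\,d\alpha\ll q^{-1/2+\eps}\,Y'^{O_s(1)}(\log Y)^{O_s(1)},
\]
which is $\ll(YY')^{-1}$ once $Y$ is large, by $q\ge(YY')^{20}$.

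The step I expect to need the most care --- and the only genuinely new ingredient compared with Lemma \ref{lem:coprime_riemann_sum} --- is the uniform bound $g(\alpha)\ll Y'^{O(1)}$. In the low-moment case one only needed $\int_0^1|g|^s\,d\alpha\ll Y^\eps$, which followed from Plancherel and H\"older; but for $s\ge 2$ Plancherel alone does not suffice, and one must push Jutila's cancellation through a partial summation against $\mc I_f w(n/Y)$, carefully tracking the size, effective support, and total variation of $\mc I_f w$ via Proposition \ref{prop:voronoi_summation} and the hypothesis $\int|w'|\ll 1$. Everything else is a routine adaptation of the earlier argument.
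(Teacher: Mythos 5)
Your overall Riemann--sum skeleton (insert a phase $e(n\beta)$ for $|\beta|\le q^{-1/2}$, average over $\beta$ and over reduced residues, and invoke Lemma \ref{lem:equidistribute_rred_class}) is the same as the paper's, but your substitute for the key input is genuinely different and leaves a quantitative gap. The paper does not bound the sum by partial summation against $\mc I_f w$; instead, after completing the $n$-sum it applies Voronoi summation a \emph{second} time, converting $\frac{1}{\sqrt Y}\sum_n\lambda_f(n)e(an/q)\,\mc I_f w(n/Y)$ back into $\frac{1}{\sqrt{q^2/Y}}\sum_{n\asymp q^2/Y}\lambda_f(n)e(\conj a n/q)\,w\!\left(n/(q^2/Y)\right)$ up to negligible error. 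The weight in this dual sum is $w$ itself, so Jutila's bound together with partial summation (this is exactly where the hypothesis $\int|w'|\ll 1$ is used) yields the clean uniform bound $\ll 1$, recorded as (\ref{eq:post_voronoi_bound}), with no losses in $Y'$; that pointwise bound is moreover reused later in the section for the $q\mapsto\tilde q$ interpolation step, so it is worth having in this strong form.

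Your direct partial summation only gives the weaker bound $\ll Y'^{O(1)}$: the total variation of $x\mapsto\mc I_f w(x/Y)$ over the effective support $n\ll YY'^{1+\delta}$ is of size about $Y'^{2+\delta}$ (it is \emph{not} controlled by $\int|w'|\ll 1$, which constrains $w$, not its Bessel transform), and correspondingly $\int_0^1|g|^s\ll Y'^{O_s(1)}$. Every error term in your argument then carries a factor $Y'^{O_s(1)}$ against a \emph{fixed} saving coming from $q\ge (YY')^{20}$: the phase-shift error is a fixed power like $(YY')^{-6+\eps}$, not $O((YY')^{-A})$ for arbitrary $A$ as you assert, and the equidistribution error is $q^{-1/2+\eps}\le (YY')^{-10+\eps}$. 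Hence your final error is $Y'^{O_s(1)}(YY')^{-6+\eps}$, which is $\ll(YY')^{-1}$ only when $Y'$ is a sufficiently small power of $Y$ relative to $s$; since the lemma is stated for all $s\ge 1$ with no such restriction, your argument does not prove it as stated for large $s$ (take, say, $Y'\asymp Y$ and $s$ large). In the paper's actual application ($Y\asymp X^{2\delta}$, $Y'=X^{2\delta_1}$, $\delta_1=\delta^{100}$) your estimates do suffice, but to get the stated error uniformly in $s$ --- and to obtain the pointwise bound (\ref{eq:post_voronoi_bound}) needed afterwards --- you should run Voronoi in the reverse direction as above rather than summing by parts against $\mc I_f w$.
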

\begin{proof}
    Proceeding as in the proof of Lemma \ref{lem:coprime_riemann_sum}, we have that
    for $|\beta|\ll q^{-\frac 12}$ 
    \begin{align*}
        \frac{1}{\sqrt{Y}}\sum_{n\le YY'} \lambda_f(n)e\prn{\frac{an}{q}}&
        \mc I_fw\pfrc{n}{Y}\\
        &= \frac{1}{\sqrt{Y}}\sum_{n\le YY'} \lambda_f(n)e\prn{\frac{an}{q} + \beta n}
        \mc I_fw\pfrc{n}{Y} + O((YY')^{-3}). 
    \end{align*}
    Now, using the bounds on $\mc I_fw$ in Proposition \ref{prop:voronoi_summation}, we may 
    complete the sum over $n$ at the cost of an error of $O((YY')^{-100})$. Applying Voronoi
    summation to the complete sum yields that
    \begin{multline*}
        \frac{1}{\sqrt{Y}}\sum_{n\le YY'} \lambda_f(n)e\prn{\frac{an}{q}}
        \mc I_fw\pfrc{n}{Y}\\
        = \frac{1}{\sqrt{Y}}\sum_n \lambda_f(n)e\pfrc{an}{q}\mc I_fw\pfrc{n}{Y} + O( (YY')^{-100}) \\
        = \frac{1}{\sqrt{q^2/Y}}\sum_{n\asymp q^2/Y} \lambda_f(n)e\pfrc{\conj an}{q}w\pfrc{n}{q^2/Y}
        + O( (YY')^{-100}).
    \end{multline*}
    By Jutila's bound (Proposition \ref{prop:jut_bound}), we have that this is $\ll 1$, so 
    it follows that since all $\alpha$ are of the form $\frac{a}{q} + \beta$ for
    some $|\beta|\ll q^{-\frac{1}{2}}$ (by Lemma \ref{lem:equidistribute_rred_class}, 
    for example), we have the pointwise bound 
    \begin{equation}
        \frac{1}{\sqrt{Y}}\sum_{n\le YY'} \lambda_f(n)e(n\alpha)
        \mc I_fw\pfrc{n}{Y}\ll 1. 
        \label{eq:post_voronoi_bound}
    \end{equation}

    Therefore, we have that 
    \begin{multline*}
        \bigg|\frac{1}{\sqrt{Y}}\sum_{n\le YY'} \lambda_f(n)e\prn{\frac{an}{q}}
        \mc I_fw\pfrc{n}{Y}\bigg|^s\\
        = \bigg|\frac{1}{\sqrt{Y}}\sum_{n\le YY'} \lambda_f(n)e\prn{\frac{an}{q} + \beta n}
        \mc I_fw\pfrc{n}{Y}\bigg|^s + O((YY')^{-3}). 
    \end{multline*}

    Then, proceeding as in the proof of Lemma \ref{lem:coprime_riemann_sum}, we obtain that 
    \begin{multline*}
        \frac{1}{\varphi(q)}\sumCp_{a(q)}
        \bigg|\frac{1}{\sqrt{Y}}\sum_{n\le YY'} \lambda_f(n)e\prn{\frac{an}{q}}
        \mc I_fw\pfrc{n}{Y}\bigg|^s\\ 
        = \frac{q^{\frac{1}{2}}}{\varphi(q)}\int_0^1 
        \bigg|\frac{1}{\sqrt{Y}}\sum_{n\le YY'} \lambda_f(n)e(n\alpha)
        \mc I_fw\pfrc{n}{Y}\bigg|^s N_q([q\alpha, q\alpha + q^{\frac{1}{2}}])
        d\alpha+ O( (YY')^{-3})\\
        = \int_0^1\bigg|\frac{1}{\sqrt{Y}}\sum_{n\le YY'} \lambda_f(n)e(n\alpha)
        \mc I_fw\pfrc{n}{Y}\bigg|^s + O( (YY')^{-3}).
    \end{multline*}
    The desired result follows.

\end{proof}
Also, as in the previous subsection, by the pointwise bound (\ref{eq:post_voronoi_bound}), we have that for $q\sim Q, \tilde q\in [Q, 2Q]$,
\begin{align*}
    &\frac{1}{\sqrt{X_1(q/Q)^2}} \sum_{n\le X_1X^{2\delta_1}}\lambda_f(n)e(n\alpha)\mc I_fw\pfrc{n}{X_1(q/Q)^2}\\
    &= \frac{1}{\sqrt{X_1(\tilde q/Q)^2}} \sum_{n\le X_1X^{2\delta_1}}\lambda_f(n)e(n\alpha)\mc I_fw\pfrc{n}{X_1(\tilde q/Q)^2}\\
    &\hspace{2cm}+ O\bigg(|q - \tilde q|\sqrt{X_1}X^{5\delta_1}Q^{-1}\bigg).
\end{align*}
Then, we obtain
\begin{align*}
    \int_0^1 
    \bigg|&\frac{1}{\sqrt{X_1(q/Q)^2}}\sum_{n\le X_1X^{2\delta_1}}\lambda_f(n)e(n\alpha)\mc I_fw\pfrc{n}{X_1(q/Q)^2}\bigg|^sd\alpha\\
    &= 
    \int_ 0^1
    \bigg|\frac{1}{\sqrt{X_1(\tilde q/Q)^2}}\sum_{n\le X_1X^{2\delta_1}}\lambda_f(n)e(n\alpha)\mc I_fw\pfrc{n}{X_1(\tilde q/Q)^2}
    \bigg|^sd\alpha + O(X^{-10\delta}).
\end{align*}
Combining with Lemma \ref{lem:coprime_riemann_sum_high} exactly as in the previous section, 
we obtain that
\begin{multline*}
    \int_0^1\bigg|\frac{1}{\sqrt{X'}}\sum_{n\le X'}\lambda_f(n)e(n\alpha)\bigg|^sd\alpha\\
    = \frac{2}{3}\int_1^2t\int_0^1\bigg|\frac{1}{t\sqrt{X_1}}\sum_{n}\lambda_f(n)e(n\alpha) \mc I_f w\pfrc{n}{X_1t^2}\bigg|^s d\alpha dt
    + O(X^{-\kappa})
\end{multline*}
for some $\kappa > 0$. The desired result follows.

\end{document}